\documentclass{amsart}
\usepackage{amsmath}
\usepackage{amssymb}
\usepackage{mathrsfs}
\usepackage{pgf,tikz}
\usepackage{mhchem}
\usetikzlibrary{positioning}
\usetikzlibrary{shadings}
\usetikzlibrary{arrows,automata}
\usepackage{graphicx}
\usepackage{systeme,mathtools}
\usepackage{amsthm,enumitem}
\usepackage{systeme}
\usepackage{bigfoot}
\input xy
\xyoption{all}

\newtheorem{theorem}{Theorem}[section]

\newtheorem{lemma}[theorem]{Lemma}
\newtheorem{corollary}[theorem]{Corollary}
\newtheorem{proposition}[theorem]{Proposition}
\theoremstyle{definition}
\newtheorem{definition}[theorem]{Definition}
\newtheorem{remark}{Remark}

\numberwithin{equation}{section}

\begin{document}
	\title[Lie and Jordan structures of Leavitt path algebras]{A note on Lie and Jordan structures of\\ Leavitt path algebras}
	\author[Huynh Viet Khanh]{Huynh Viet Khanh$  ^\dagger$}\thanks{$^\dagger$ Corresponding author}
	\address{Department of Mathematics and Informatics, HCMC University of Education, 280 An Duong Vuong Str., Cho Quan Ward, Ho Chi Minh City, Vietnam}
	\email{khanhhv@hcmue.edu.vn}
	\author[Le Qui Danh]{Le Qui Danh}
	\address{University of Architecture Ho Chi Minh City, Ho Chi Minh City, Vietnam}
	\email{danh.lequi@uah.edu.vn}
	
	\keywords{leavitt path algebra; Cohn path algebra; Lie algebra; Jordan algebra\\ 
		\protect \indent 2020 {\it Mathematics Subject Classification.} 16S88; 16W10}
	
	\maketitle
	\begin{abstract} 
		Let $L_K(E)$ be the Leavitt path algebra of a directed graph $E$ over a field $K$. In this paper, we determine $E$ and $K$ for the Lie algebra $\mathbf{K}_{L_K(E)}$ and the Jordan algebra $\mathbf{S}_{L_K(E)}$ arising from $L_K(E)$ with respect to the standard involution to be solvable. 
	\end{abstract}
	
	\section{Introduction}
	Every associative algebra $\mathcal{A}$ over a field $K$ can give rise to two new structures: the \textit{Lie algebra}, denoted by $\mathcal{A}^-$, and the \textit{Jordan algebra}, denoted by $\mathcal{A}^\circ$ over $K$. These structures can be obtained by defining new products within $\mathcal{A}$, namely the Lie bracket $[x,y]=xy-yx$ for Lie algebras and the Jordan product $x \circ y = xy+yx$ for Jordan algebras respectively, for every $x,y\in  \mathcal{A}$. A reasonable question arises: Are the properties of $\mathcal{A}$ as a ring mirrored in its Lie or Jordan algebra counterparts? Specifically, does the simplicity of $\mathcal{A}$ as a ring guarantee its simplicity as a Lie or Jordan algebra? In a seminal paper \cite{Pa_herstein_61} by Herstein, this question was answered affirmatively. Assume further that $\mathcal{A}$ is equipped with an involution $\star$; that is, a map $^\star: \mathcal{A}\to  \mathcal{A}$ satisfying the following conditions:
	\begin{enumerate}
		\item $(a+b)^{\star}=a^{\star}+b^{\star}$,
		\item $(ab)^{\star}=b^{\star}a^{\star}$,
		\item $(a^{\star})^{\star}=a$,
	\end{enumerate}
	for all $a,b\in \mathcal{A}$. Let 
	$$
	\mathbf{S}_{\mathcal{A}}=\{x\in \mathcal{A} : x^{\star}=x\} \text{ and } \mathbf{K}_{\mathcal{A}}=\{x\in \mathcal{A} : x^{\star}=-x\}
	$$
	be the sets of \textit{symmetric elements} and \textit{skew-symmetric elements} of $\mathcal{A}$ with respect to ${\star}$, respectively. It is obvious that $\mathbf{K}_{\mathcal{A}}$ and $\mathbf{S}_{\mathcal{A}}$ are Lie subalgebra and Jordan subalgebra of $\mathcal{A}^-$ and $\mathcal{A}^\circ$ respectively. These are the most important structures of Lie or Jordan algebras arising from an associative algebra. In particular, when $\mathcal{A}$ is the full matrix ring $\mathbb{M}_n(K)$ and $^\star$ is the transpose operation, then $\mathbf{K}_{\mathcal{A}}$ is identified with the Lie algebra of \textit{skew-symmetric matrices} which corresponds to the \textit{Lie group of orthogonal matrices}. Under certain assumptions on $\mathcal{A}$, Lie and Jordan structures of $\mathbf{K}_{\mathcal{A}}$ and  $\mathbf{S}_{\mathcal{A}}$ respectively were intensively investigated by Herstein in a series of papers (\cite{Pa_herstein_55_a}-\cite{Pa_herstein_61}) and by other authors in the references therein.
	
	In this paper, we consider the case when $\mathcal{A}$ is the Leavitt path algebra $L_K(E)$ of a directed graph $E$ over a field $K$. The concept of the Leavitt path algebra $L_K(E)$ associated with a graph $E$ and coefficients from a field $K$ made its debut in \cite{Pa_abrams-pino-05}, sparking widespread interest among scholars. These algebras were concurrently and separately introduced in \cite{Pa_ara-moreno-pardo-07}, despite being published two years following \cite{Pa_abrams-pino-05}. Within the past few years, the Lie subalgebras of $L_K(E)^-$ arising from $L_K(E)$ have been investigated by many authors. Motivated by the groundbreaking work of Herstein \cite{Pa_herstein_61}, necessary and sufficient conditions for the Lie algebra $[L_K(E),L_K(E)]$ of $L_K(E)^-$ to be simple were given in \cite{Pa_abrams-mesyan-12} and \cite{Pa_alahmedi-alsulami-16}, under the assumption that $E$ is row-finite. In another direction, the paper \cite{nam-zhang-22} completely identified the graph $E$ and the field $K$ for the Lie algebra $L_K(E)^-$ to be solvable. 
	
	Let $E$ be a graph and $K$ a field. Then, the mapping $^\star: L_K(E)\to L_K(E)$ which sends $v$ to $v$, $e$ to $e^*$ and $e^*$ to $e$ for all  $v\in E^0$ and $e\in E^1$, and leaves all elements in $K$ fixed, extends to an involution of $L_K(E)$. We call this the \textit{standard involution} on $L_K(E)$ (see also \cite[page 36]{Bo_abrams_2017}). Let $\mathbf{K}_{L_K(E)}$ and $\mathbf{S}_{L_K(E)}$ be the Lie subalgebra of $L_K(E)^-$ and the Jordan subalgebra of $L_K(E)^\circ$ with respect to this involution, respectively. In \cite{Pa_alahmadi-alsulami-16-2} the simplicity of the Lie algebra $\mathbf{K}_{L_K(E)}$ was also studied. The current paper is devoted to a study of the Lie algebra $\mathbf{K}_{L_K(E)}$ and the Jordan algebra $\mathbf{S}_{L_K(E)}$. To be more specific, we identify the graph $E$ and the field $K$ for the Lie algebra $\mathbf{K}_{L_K(E)}$ and the Jordan algebra $\mathbf{S}_{L_K(E)}$ to be solvable. 
	
	\section{Preliminaries}
	\subsection{Directed graphs and Leavitt path algebras.}\label{subsection_LPA}
	Throughout this paper, the basic notations and conventions are taken from \cite{Bo_abrams_2017}. In particular, a (directed) graph $E=(E^0, E^1, r, s)$ consists of two sets $E^0$ and $E^1$ together with maps $r, s: E^1\to E^0$. The elements of $E^0$ are called \textit{vertices} and the elements of $E^1$ \textit{edges}. For the convenience of readers, in the following, we outline some concepts and notations that we use substantially in this paper. 
	
	Let $E$ be a graph. A vertex $v$ in $E$ is a \textit{sink} if it emits no edges, while it is an \textit{infinite emitter} if it emits infinitely many edges. A graph $E$ is said to be \textit{row-finite} if $E$ contains no infinite emitter. A vertex $v$ is said to be \textit{regular} if it is neither a sink nor an infinite emitter. A \textit{path} $\mu$ in a graph $E$ is a finite sequence of edges $\mu=e_1e_2\dots e_n$ with $r(e_i) =s(e_{i+1})$ for all $1\leq i\leq n-1$. We set $s(\mu):=s(e_1)$ and $r(\mu):=r(e_n)$. The set of all finite paths in $E$ is denoted by ${\rm Path}(E)$. An \textit{exit} for a path $\mu=e_1\dots e_n$ is an edge $e$ such that $s(e)=s(e_i)$ for some $i$ and $e\ne e_i$.

	\begin{definition}[Leavitt path algebra]\label{definition_1.3}
		Let $E$ be a graph and $K$ a field. We define a set $\left( E^1\right)^*$ consisting of symbols of the form $\{e^*| e\in E^1\}$.   The \textit{Leavitt path algebra of $E$ with coefficients in $K$}, denoted by $L_K(E)$, is the free associative $K$-algebra generated by $E^0\cup E^1\cup (E^1)^*$, subject to the following relations:
		\begin{enumerate}[]
			\item[(V)] $vv'=\delta_{v,v'}v$ for all $v,v'\in E^0$,
			\item[(E1)] $s(e)e=er(e)=e$ for all $e\in E^1$,
			\item[(E2)] $r(e)e^*=e^*s(e)=e^*$ for all $e\in E^1$,
			\item[(CK1)] $e^*f=\delta_{e,f}r(e)$ for all $e,f\in E^1$, and
			\item[(CK2)] $v=\sum_{\{e\in E^1|s(e)=v\}}ee^*$ for every $v\in{\rm Reg}(E)$.
		\end{enumerate}
		
		Let $E$ be a graph. For each $e\in E^1$, we call $e^*$ the \textit{ghost edge}. Also, the notation $\mu^*=e_n^*\dots e_2^*e_1^*$ stands for the corresponding ghost path of the path $\mu=e_1\dots e_n$.
	\end{definition}
	
	\subsection{Lie and Jordan structures of associative algebras}
	Every associative algebra $\mathcal{A}$  over a field $K$ is endowed with a Lie algebra structure for the \textit{Lie bracket} $[a,b]=ab-ba$ for all $a,b\in \mathcal{A}$. We denote this Lie algebra by  $ \mathcal{A}^-$. For $K$-subspaces $S$ and $T$ of $\mathcal{A}$, we define $[S,T]={\rm span}_K\{ ab-ba : a\in S, b\in T \}$. A $K$-subspace $\mathcal{L}$ of $\mathcal{A}$ is called a \textit{Lie subalgebra} of $\mathcal{A}^-$ if $[a,b]\in \mathcal{L}$ for all $a,b\in \mathcal{L}$.  For the Lie subalgebra $\mathcal{L}$ of $\mathcal{A}^-$, we define inductively the two series
	$$
	\mathcal{L}^{(0)}=\mathcal{L}, \; \mathcal{L}^{(1)}=[\mathcal{L},\mathcal{L}], \; \dots, \; \mathcal{L}^{(n)}=[\mathcal{L}^{(n-1)},\mathcal{L}^{(n-1)}],\dots
	$$
	and
	$$
	\mathcal{L}^0=\mathcal{L}, \; \mathcal{L}^0=[\mathcal{L},\mathcal{L}], \; \dots, \; \mathcal{L}^n=[\mathcal{L}^{n-1},L],\dots
	$$
	The Lie subalgebra $\mathcal{L}$ is called \textit{Lie solvable} (resp., \textit{Lie nilpotent}) if there exists $n\geq0$ (resp., $m\geq0$) such that $\mathcal{L}^{(n)}=0$ (resp., $\mathcal{L}^m=0$). If such a smallest $n$ (resp., $m$) exists, we will say that $ \mathcal{L}$ is \textit{Lie solvable of index} $n$ (resp., \textit{Lie nilpotent of index} $m$).
	
	Similarly, a \textit{Jordan product} can be defined on $\mathcal{A}$ by $a \circ b = ab+ba$, for all $a,b\in \mathcal{A}$. If $\mathcal{A}$ is viewed as a Jordan algebra for the product $ \circ $, we denote it by $\mathcal{A}^\circ$. Note that in the case ${\rm char}(K)=2$, the two structures $\mathcal{A}^\circ$ and $\mathcal{A}^-$ coincide. A $K$-subspace $\mathcal{J}$ of $\mathcal{A}$ is called a \textit{Jordan subalgebra} of $\mathcal{A}^\circ$ if $a\circ b \in \mathcal{J}$ for all $a,b\in \mathcal{J}$. 
	For two $K$-subspaces $S$ and $T$ of $\mathcal{A}$, we define
	$$
	S\circ T={\rm span}_K\{ ab+ba : a\in S, b\in T \}.
	$$ 
	For the Jordan subalgebra $\mathcal{J}$ of $\mathcal{A}^\circ$, we define the analogous series for $\mathcal{J}$ as follows: 
	$$
	\mathcal{J}_{(0)}=\mathcal{J}, \; \mathcal{J}_{(1)}=\mathcal{J}\circ \mathcal{J}, \; \dots, \; \mathcal{J}_{(n)}=\mathcal{J}_{(n-1)}\circ \mathcal{J}_{(n-1)},\dots
	$$
	and 
	$$
	\mathcal{J}_0=\mathcal{J}, \; \mathcal{J}_1=\mathcal{J}\circ \mathcal{J}, \; \dots, \; \mathcal{J}_n=\mathcal{J}_{n-1}\circ \mathcal{J},\dots
	$$
	The Jordan subalgebra $\mathcal{J}$ is called \textit{Jordan solvable} (resp., \textit{Jordan nilpotent}) if there exists $n\geq0$ (resp., $m\geq0$) such that $\mathcal{J}_{(n)}=0$ (resp., $\mathcal{J}_m=0$). The \textit{index of a solvable Jordan algebra} is defined similarly.
	
	In this paper, we particularly consider the special case where $\mathcal{A} = L_K(E)$, $\mathcal{L}=\mathbf{K}_{L_K(E)}$ and $\mathcal{J}=\mathbf{S}_{L_K(E)}$ with respect to the standard involution on $L_K(E)$.
	
	\subsection{Homomorphisms of algebras with involutions}
	Let $K$ be a field, and $(\mathcal{A},^\star)$ and $(\mathcal{B},^ \natural)$ be $K$-algebras with involutions. \textit{A homomorphism of $K$-algebras with involution} $\varphi: (\mathcal{A},^\star)\to (\mathcal{B},^\natural)$ is a $K$-algebra homomorphism $\varphi: \mathcal{A}\to \mathcal{B}$ such that $(\varphi(ab))^\natural=\varphi(b^\star)\varphi(a^\star)$. 
	
	It is obvious that if $(\mathcal{A},^\star)$ is isomorphic to $(\mathcal{B},^\natural)$ as $K$-algebras with involution, then the Lie subalgebras $\mathbf{K}_{\mathcal{A}}$ and $\mathbf{K}_{\mathcal{B}}$, with respect to the involutions $^\star$ and $\natural$ respectively, are also isomorphic as Lie algebras over $K$.  Similarly, the Jordan subalgebras $\mathbf{S}_{\mathcal{A}}$ and $\mathbf{S}_{\mathcal{B}}$ of $\mathcal{A}^\circ$ and $\mathcal{B}^\circ$, with respect to the involutions $^\star$ and $\natural$ respectively, are also isomorphic as Jordan algebras over $K$.
	\section{Lie Solvability and Jordan Solvability of $\mathbb{M}_n(\mathcal{A})$}
	Let $K$ be a field, and $\mathcal{A}$  a unital $K$-algebra with involution $^\natural$. Let  $\mathbb{M}_n(\mathcal{A})$ be the matrix ring of degree $n\geq 1$ over $\mathcal{A}$. Then, it is easy to check that $^\natural$ is extended to an involution, which is again denoted by $^\natural$, on $\mathbb{M}_n(\mathcal{A})$ by letting 
	\begin{equation}\label{equation_involution}
		(a_{ij})_{n\times n}^\natural=(a_{ji}^\natural)_{n\times n}
	\end{equation}
	for every matrix $(a_{ij})\in \mathbb{M}_n(\mathcal{A})$. Let $\mathbf{K}_{\mathbb{M}_n(\mathcal{A})}$ 
	be the Lie subalgebra of $\mathbb{M}_n(\mathcal{A})^-$ with respect to the involution $\natural$ on $\mathbb{M}_n(\mathcal{A})$ as defined in (\ref{equation_involution}). In this section, we give conditions on $n$ and $\mathcal{A}$ for $\mathbf{K}_{\mathbb{M}_n(\mathcal{A})}$ to be Lie solvable. First, we fix the following notation: In  $\mathbb{M}_n(\mathcal{A})$, we denote by $E_{ij}$ the matrix whose {$(i,j)$}-entry is $1$ and the other entries are $0$. The following proposition is the main result of this section:
	
	\begin{proposition}\label{proposition _involution}
		Let $K$ be a field, and $\mathcal{A}$ be a unital $K$-algebra with an involution $^\natural$. Assume further that  $\mathcal{A}$  is an integral domain. Let $\mathbf{K}_{\mathbb{M}_n(\mathcal{A})}$ be as defined above. Then, the following assertions hold:
		\begin{enumerate}[font=\normalfont]
			\item[(a)] $\mathbf{K}_{\mathbb{M}_n(\mathcal{A})}$ is not Lie solvable when $n\geq3$.
			\item[(b)] If ${\rm char}(K)=2$, then $\mathbf{K}_{\mathbb{M}_2(\mathcal{A})}$ is not Lie nilpotent.
			\item[(c)] If ${\rm char}(K)=2$, then $\mathbf{K}_{\mathbb{M}_2(\mathcal{A})}$ is Lie solvable, with index at most $3$. If moreover, the involution on $\mathcal{A}$ is trivial, then the index is $2$.
			\item[(d)] Assume that ${\rm char}(K)\ne2$. If the involution on $\mathcal{A}$ is trivial, then $[\mathbf{K}_{\mathbb{M}_2(\mathcal{A})},\mathbf{K}_{\mathbb{M}_2(\mathcal{A})}]=0$; while if the involution on $\mathcal{A}$ is non-trivial, then $\mathbf{K}_{\mathbb{M}_2(\mathcal{A})}$ is not Lie solvable.
		\end{enumerate}
	\end{proposition}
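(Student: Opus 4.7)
My plan is to handle the four parts separately. For each, I exhibit explicit matrices in $\mathbf{K}_{\mathbb{M}_n(\mathcal{A})}$ and then either bound the derived series from above (in the solvability claims) or locate a non-solvable Lie subalgebra (in the non-solvability claims). The structural remark I use throughout is that, under the extended involution $(a_{ij})^\natural = (a_{ji}^\natural)$, a matrix is skew exactly when $a_{ji} = -a_{ij}^\natural$ for all $i, j$, so its diagonal entries lie in $\mathbf{K}_\mathcal{A}$; in characteristic $2$ this collapses to $a_{ji} = a_{ij}^\natural$ and $\mathbf{K} = \mathbf{S}$. Commutativity of $\mathcal{A}$ (implicit in the integral-domain hypothesis) will supply the main cancellations, and the domain property will let me conclude $c^k \neq 0$ from $c \neq 0$.

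For (a), the matrices $X_{ij} := E_{ij} - E_{ji}$ ($i \neq j$) lie in $\mathbf{K}_{\mathbb{M}_n(\mathcal{A})}$ because $1^\natural = 1$. Assuming $n \geq 3$, direct computation yields the $\mathfrak{so}(3)$-type relations
\[
[X_{12}, X_{23}] = X_{13}, \quad [X_{23}, X_{13}] = X_{12}, \quad [X_{13}, X_{12}] = X_{23}.
\]
Hence $\mathcal{L}_0 = \operatorname{span}_K\{X_{12}, X_{13}, X_{23}\}$ is a Lie subalgebra satisfying $[\mathcal{L}_0, \mathcal{L}_0] = \mathcal{L}_0$, so $\mathcal{L}_0^{(m)} = \mathcal{L}_0 \neq 0$ for every $m$, and $\mathbf{K}_{\mathbb{M}_n(\mathcal{A})}$ is not Lie solvable. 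For (b), in characteristic $2$ both $X = E_{12} + E_{21}$ and $Y = E_{11}$ are $^\natural$-fixed, and the one-line calculation $[Y, X] = X$ (using $-1 = 1$) shows that $X$ survives in every term of the lower central series, blocking nilpotence.

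For (c), I would compute the bracket of generic $M = \begin{pmatrix} \alpha & a \\ a^\natural & \beta \end{pmatrix}$, $N = \begin{pmatrix} \gamma & b \\ b^\natural & \delta \end{pmatrix}$ in $\mathbf{K}_{\mathbb{M}_2(\mathcal{A})}$. The combination of $\operatorname{char}(K) = 2$ and commutativity collapses each diagonal entry of $[M, N]$ to $ab^\natural + ba^\natural \in \mathbf{S}_\mathcal{A}$, so
\[
\mathbf{K}_{\mathbb{M}_2(\mathcal{A})}^{(1)} \subseteq \mathcal{M} := \left\{\begin{pmatrix} s & u \\ u^\natural & s \end{pmatrix} : s \in \mathbf{S}_\mathcal{A},\; u \in \mathcal{A}\right\}.
\]
A second bracket computation inside $\mathcal{M}$ (once more using commutativity and $\operatorname{char} K = 2$) kills the off-diagonal entries, so $[\mathcal{M}, \mathcal{M}] \subseteq \mathbf{S}_\mathcal{A} \cdot I$; scalar matrices pairwise commute, so one further bracket vanishes and $\mathbf{K}^{(3)} = 0$, giving index at most $3$. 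When $^\natural$ is trivial on $\mathcal{A}$, the term $ab^\natural + ba^\natural$ reduces to $2ab = 0$, so $\mathbf{K}^{(1)}$ already lies in $\{\begin{pmatrix} 0 & u \\ u & 0 \end{pmatrix} : u \in \mathcal{A}\}$, and a direct computation shows any two such matrices commute, forcing index exactly $2$.

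For (d) with $\operatorname{char}(K) \neq 2$ and trivial involution, $\mathbf{K}$ reduces to $\{\begin{pmatrix} 0 & a \\ -a & 0 \end{pmatrix} : a \in \mathcal{A}\}$ and the commutator of two such matrices is the scalar $(ab - ba) I = 0$ by commutativity. For non-trivial involution, I fix $0 \neq c \in \mathbf{K}_\mathcal{A}$ (obtainable as $c = d - d^\natural$ from any $d$ with $d^\natural \neq d$, using $\operatorname{char}(K) \neq 2$) and work with
\[
h = \begin{pmatrix} c & 0 \\ 0 & -c \end{pmatrix}, \quad Z = \begin{pmatrix} 0 & 1 \\ -1 & 0 \end{pmatrix}, \quad Z' = \begin{pmatrix} 0 & c \\ c & 0 \end{pmatrix},
\]
all in $\mathbf{K}$, with $\mathfrak{sl}_2$-type relations $[h, Z] = 2Z'$, $[h, Z'] = 2c^2 Z$, $[Z, Z'] = 2h$. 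Because $c^2 \in \mathbf{S}_\mathcal{A}$, the $K$-Lie subalgebra $\mathcal{L}_0$ they generate inside $\mathbf{K}$ is $K[c^2] h + K[c^2] Z + K[c^2] Z'$; a short inductive analysis of the derived series $\mathcal{L}_0^{(n)}$ shows it always contains some nonzero $c^{2k_n} h$ (each stage introduces an extra factor of $c^2$, but $c^k \neq 0$ in the integral domain $\mathcal{A}$), so $\mathcal{L}_0$, and hence $\mathbf{K}_{\mathbb{M}_2(\mathcal{A})}$, is not Lie solvable. I expect the main technical obstacle to be the two-stage bookkeeping in part (c): each successive derived subspace has to be pinned inside a visibly smaller distinguished subspace, and one must cleanly separate the cancellations coming from $\operatorname{char}(K) = 2$ from those coming from commutativity of $\mathcal{A}$.
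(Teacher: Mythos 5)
Your proof is correct, and for about half of the proposition it coincides with the paper's: part (b) uses literally the same pair $E_{12}+E_{21}$, $E_{11}$, and the backbone of (c) and of the trivial-involution halves of (c) and (d) is the same explicit $2\times 2$ commutator computation (the paper records it as equations (3.1)--(3.4) and reads off that a commutator of commutators is diagonal in characteristic $2$, which is your containment $\mathbf{K}^{(1)}\subseteq\mathcal{M}$, $[\mathcal{M},\mathcal{M}]\subseteq\mathbf{S}_{\mathcal{A}}\cdot I$ phrased differently). Where you genuinely diverge is in the two non-solvability arguments. For (a) the paper builds a recursive sequence $A_m,B_m,X_m$ with scalar coefficients $a_m,b_m,c_m$ and must carry the invariant $a_m^2+b_m^2\neq 0$ through the induction; your observation that $\operatorname{span}_K\{E_{12}-E_{21},E_{13}-E_{31},E_{23}-E_{32}\}$ is a perfect Lie subalgebra (the three cyclic relations have coefficient $1$, so they survive every characteristic) replaces that bookkeeping with a one-line conclusion $\mathcal{L}_0^{(m)}=\mathcal{L}_0\neq 0$, which is cleaner and less error-prone. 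For the non-trivial case of (d) the paper again runs a two-element recursion with $v_{m+1}=4v_m^3$, while you track an $\mathfrak{sl}_2$-like triple $h,Z,Z'$ over $K[c^2]$ and show each derived term retains a nonzero $c^{2k}h$; both hinge on the same two facts (characteristic $\neq 2$ keeps the factors of $2$ alive, the domain hypothesis keeps the powers of $c$ alive), so this is a reorganization rather than a new idea, but it makes the role of the hypotheses more transparent. One small loose end: in (c) with trivial involution you assert the index is \emph{exactly} $2$ after showing $\mathbf{K}^{(2)}=0$; you should also note that $\mathbf{K}^{(1)}\neq 0$, e.g.\ via $[E_{11},E_{12}+E_{21}]=E_{12}+E_{21}$ (the paper is equally silent on this point).
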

	\begin{proof}
		(a): Assume first that $n\geq 3$. Let $a,b,c\in K$ be such that $c\neq 0$ and $a^2+b^2\ne 0$. We construct inductively a family of matrices $\{A_m, B_m, X_m\;|\; m=1,2,\dots\}$  in $\mathbb{M}_n(\mathcal{A})$ as follows: Put
		$$
		A_1=a(E_{12}-E_{21})+b(E_{13}-E_{31}),~~
		B_1=c(E_{23}-E_{32}),~~
		X_1=[A_1,B_1];
		$$
		and for each $m\geq 2$, we put 
		$$
		A_m=[X_{m-1},B_{m-1}],~~B_m=[X_{m-1},A_{m-1}], \text{ and }X_{m}=[A_m,B_m].
		$$
		Since $A_1,B_1\in\mathbf{K}_{\mathbb{M}_n(\mathcal{A})}$, we get that $X_1\in\mathbf{K}_{\mathbb{M}_n(\mathcal{A})}^{(1)}$. Inductively, it can be checked that $A_m, B_m\in \mathbf{K}_{\mathbb{M}_n(\mathcal{A})}^{(m-1)}$, from which it follows that $X_m\in  \mathbf{K}_{\mathbb{M}_n(\mathcal{A})}^{(m)}$. For each $m\geq1$, we claim that there exist $a_m,b_m,c_m\in K$ such that $c_m\neq 0$, $a_m^2+b_m^2 \ne 0$, and
		$$ 
		\begin{aligned}
			& A_m=a_m(E_{12}-E_{21})+b_m(E_{13}-E_{31}),\\
			& B_m=c_m(E_{23}-E_{32}),\\
			& X_m\neq 0.
		\end{aligned}
		$$
		The argument proceeds by induction on $m$. For $m=1$, then we set $a_1=a$, $b_1=b$ and $c_1=c$. Then, we have
		$$ 
		\begin{aligned}
			& A_1=a_1(E_{12}-E_{21})+b_1(E_{13}-E_{31}),\\
			& B_1=c_1(E_{23}-E_{32}),\\
			& X_1=-b_1c_1(E_{12}-E_{21})+a_1c_1(E_{13}-E_{31})\neq 0.
		\end{aligned}
		$$
		Thus, the claim is true for $m=1$. Now, assume that the claim is true for all $m\geq i$ for some $i\geq 2$. We will prove the claim is also true for $m=i+1$. Now, the inductive assumption shows that there exist $a_i,b_i,c_i\in K$ such that $c_i\neq 0, a_i^2+b_i^2\ne0$, and
		$$ 
		\begin{aligned}
			& A_i=a_i(E_{12}-E_{21})+b_i(E_{13}-E_{31}),\\
			& B_i=c_i(E_{23}-E_{32}),\\
			&X_i=[A_i,B_i]=-b_ic_i(E_{12}-E_{21})+a_ic_i(E_{13}-E_{31})\neq 0.
		\end{aligned}
		$$
		From this, a direct calculation shows that 
		$$ 
		\begin{aligned}
			&A_{i+1}=[X_i,B_i]=-a_ic_i^2(E_{12}-E_{21})-b_ic_i^2(E_{13}-E_{31}),\\
			&B_{i+1}=[X_i,A_i]=(a_i^2+b_i^2)c_i(E_{23}-E_{32}),\\
			&X_{i+1}=[A_{i+1},B_{i+1}]=b_i(a_i^2+b_i^2)c_i^3(E_{12}-E_{21})-a_i(a_i^2+b_i^2)c_i^3(E_{13}-E_{31}).
		\end{aligned}
		$$
		Since $a_i^2+b_i^2\ne0$,  we have $X_{i+1}\neq 0$. The claim is proved. 
		It follows that $X_m\neq 0$ for all $m\geq 1$, and so $\mathbf{K}_{\mathbb{M}_n(\mathcal{A})}^{(m)}\neq 0$ for all $m\geq 1$. Therefore $\mathbf{K}_{\mathbb{M}_n(K)}$ is not Lie solvable, proving (a).
		
		To prove the remaining assertions, we assume from now on that $n=2$. Let us calculate $\mathbf{K}_{\mathbb{M}_2(\mathcal{A})}$ more explicitly. It is obvious that 
		$$
			\mathbf{K}_{\mathbb{M}_2(\mathcal{A})}=\left\lbrace \begin{pmatrix}
				a & b \\
				-b^\natural & c
			\end{pmatrix} \;|\; a,b,c\in \mathcal{A} \text{ and } a=-a^\natural, c=-c^\natural \right\rbrace .
		$$
		
		Let $A_i, B_i$ ($1\leq i\leq 4$) be arbitrary matrices in $\mathbf{K}_{\mathbb{M}_2(\mathcal{A})}$. Then, these matrices can be written as
		\[
		A_i=\left(\begin{array}{cc}
			a_i & b_i\\
			-b_i^\natural & c_i
		\end{array}\right)
		\text{ and }
		B_i=\left(\begin{array}{cc}
			u_i & v_i\\
			-v_i^\natural & w_i
		\end{array}\right),
		\]
		where $a_i,b_i,c_i,u_i,v_i,w_i\in \mathcal{A}$ and $a_i=-a_i^\natural$, $c_i=-c_i^\natural$, $u_i=-u_i^\natural$, $w_i=-w_i^\natural$.
		
		\smallskip 
		
		For each $1\leq i\leq 4$, put $X_i=[A_i,B_i]$. Because $a_i=-a_i^\natural$, $c_i=-c_i^\natural$, $u_i=-u_i^\natural$, and $w_i=-w_i^\natural$, 
		we get
		\begin{equation}\label{equation_1}
			X_i=\left(\begin{array}{cc}
				r_i& s_i\\
				-s_i^\natural & -r_i
			\end{array}\right), \text{ where }
		\end{equation}
		\begin{equation}\label{equation_2}
			r_i=b_i^\natural v_i-b_iv_i^\natural \text{ and } s_i=v_i(a_i-c_i)+b_i(w_i-u_i).
		\end{equation}
		
		It follows that 
		\begin{equation}\label{equation_3}
			[X_1,X_2] =\left(\begin{array}{cc}
				s_2s_1^\natural -s_1s_2^\natural  & 0 \\[0.3cm]
				2(r_1s_2^\natural-s_1^\natural r)&  s_2^\natural s_1-s_1^\natural s_2
			\end{array}\right), \text{ where }
		\end{equation}
	   
	    \begin{equation}\label{equation_4}
	    	\begin{aligned}
	    		&	s_2s_1^\natural-s_1s_2^\natural\hspace*{0.1cm}			= 	(v_2v_1^\natural-v_1v_2^\natural)(a_1-c_1)(c_2-a_2)\\
	    		&\hspace*{1.9cm}+(v_2b_1^\natural-b_1v_2^\natural)(a_2-c_2)(u_1-w_1)\\
	    		&\hspace*{1.9cm}+(v_1b_2^\natural-b_2v_1^\natural)(w_2-u_2)(a_1-c_1)\\
	    		&\hspace*{1.9cm}+(b_1b_2^\natural-b_2b_1^\natural)(u_1-w_1)(u_2-w_2).
	    	\end{aligned}
	    \end{equation}

		(b)  and (c): Assume that ${\rm char}(K)=2$. As $2(r_1s_2^\natural-s_1^\natural r)=0$, it follows from (\ref{equation_3}) that $[X_1,X_2]$ is a diagonal matrix. Similarly, we also obtain that $[X_3,X_4]$ is also a diagonal matrix. Thus $[[X_1,X_2],[X_3,X_4]]=0$, which means that $\mathbf{K}_{\mathbb{M}_2(\mathcal{A})}^{(3)} = 0$. We divide our situation into two possible cases:
		
		\medskip 
		
		\textbf{Case 1.} \textit{${\rm char}(K)=2$ and the involution $^\natural$ on $\mathcal{A}$ is trivial.} As $b_i^\natural=b_i$ and $v_i^\natural=v_i$, we get from (\ref{equation_2}) that $r_i=0$. It follows from (\ref{equation_1}) that $X_i=\left(\begin{array}{cc}
			0& s_i\\
			-s_i^\natural & 0
		\end{array}\right)$, and so $[X_1,X_2]=0$; or equivalently, we have $\mathbf{K}_{\mathbb{M}_2(\mathcal{A})}^{(2)} = 0$. In other words, $\mathbf{K}_{\mathbb{M}_2(\mathcal{A})}$ is Lie solvable of index 2. This completes the proof of the second assertion of (c).
		
		\medskip 
		
		\textbf{Case 2.} \textit{${\rm char}(K)=2$ and the involution $^\natural$ on $\mathcal{A}$ is non-trivial.} In this case, suitable choices of the matrices $A_1, A_2, B_1, B_2$ yield that $[X_1,X_2]\ne0$. For example, if we choose $a_1=c_1$, $a_2=c_2$, $b_1=1$, $u_1\ne w_1$, $w_1=w_2=0$, $0\ne u_1=u_2\in K$, $b_2\ne b_2^\natural$, then it follows from (\ref{equation_4}) that $s_2s_1^\natural-s_1s_2^\natural =(b_2^\natural-b_2)u_1^2\ne 0$. Therefore, we get from (\ref{equation_3})  that $[X_1,X_2]\ne0$. It follows that $\mathbf{K}_{\mathbb{M}_2(\mathcal{A})}$ is Lie solvable of index 3. This completes the proof of (c).
		
		Now, we give the proof of (b). Let $A=E_{12}+E_{21}$ and $B=E_{11}$, which belong to $\mathbf{K}_{\mathbb{M}_2(\mathcal{A})}$. Define 
		$$
		[A,\ce{_m}B]= [...[A,{\! \underbrace{B],\cdots,B]\,}_\text{$m$ times}},
		$$
		for some $m\geq1$. Then, it is clear that $[A,\ce{_m}B]\in\mathbf{K}_{\mathbb{M}_2(\mathcal{A})}^m$ for all $m\geq1$. Moreover, it is straightforward to show that $[A,\ce{_m}B]=E_{12}+E_{21}\ne 0$ for all $m\geq1$. This implies that $\mathbf{K}_{\mathbb{M}_2(\mathcal{A})}^m\ne 0$ for all $m\geq1$, yielding that  $\mathbf{K}_{\mathbb{M}_2(\mathcal{A})}$ is not Lie nilpotent. The proof of (b) is complete.
		
		(d): Recall that 
		$$
		\mathbf{K}_{\mathbb{M}_2(\mathcal{A})}=\left\lbrace \begin{pmatrix}
			a & b \\
			-b^\natural & c
		\end{pmatrix} \;|\; a,b,c\in \mathcal{A} \text{ and } a=-a^\natural, c=-c^\natural \right\rbrace .
		$$
		We consider the following two possible cases: 
		
		\medskip 
		
		\textbf{Case 3.} \textit{${\rm char}(K)\ne 2$ and the involution $^\natural$ on $\mathcal{A}$ is trivial.} Because $a^\natural=a$, the equation $a=-a^\natural$ is equivalent to $2a=0$, and hence $a=0$. Similarly, we also obtain that $c=0$. Moreover, as $b^\natural=b$, we finally get that 
		$$
		\mathbf{K}_{\mathbb{M}_2(\mathcal{A})}=\left\lbrace \begin{pmatrix}
			0 & b \\
			-b & 0
		\end{pmatrix} \;|\; b\in \mathcal{A} \right\rbrace .
		$$
		 It follows that $[\mathbf{K}_{\mathbb{M}_2(\mathcal{A})},\mathbf{K}_{\mathbb{M}_2(\mathcal{A})}]=0$. 
		 
		 \medskip 
		 
		 \textbf{Case 4.} \textit{${\rm char}(K)\ne 2$ and the involution $^\natural$ on $\mathcal{A}$ is non-trivial.} Fix $a\in \mathcal{A}$ such that $a\ne a^\natural$. Put $u=a-a^\natural$. Then, we have $u=-u^\natural$. We construct inductively a family of matrices $A_m$, $B_m$, and $X_m$ as follows. For $m=1$, we put
		 \[
		 A_1=\left(\begin{array}{cc}
		 	0 & u\\
		 	u & 0
		 \end{array}\right),~
		 B_1=\left(\begin{array}{cc}
		 	u & 0\\
		 	0 & -u
		 \end{array}\right) \text{ and }X_1=[A_1,B_1],
		 \]
		 For $m\geq2$, we put
		 \[
		 A_m=[X_{m-1},B_{m-1}],
		 B_m=[X_{m-1},A_{m-1}] \text{ and }X_m=[A_m,B_m].
		 \]
		 Since $A_1,B_1\in\mathbf{K}_{\mathbb{M}_2(\mathcal{A})}$, we get that $X_1\in\mathbf{K}_{\mathbb{M}_2(\mathcal{A})}^{(1)}$. Inductively, it can be checked that $A_m, B_m\in \mathbf{K}_{\mathbb{M}_2(\mathcal{A})}^{(m-1)}$, from which it follows that $X_m\in  \mathbf{K}_{\mathbb{M}_2(\mathcal{A})}^{(m)}$.
		  We claim that for each positive integer $m$, there exists $v_m\in \mathcal{A}$ such that $0\neq v_m=-v_m^\natural$ and
		 $$ 
		 \begin{aligned}
		 	&A_m=\left(\begin{array}{cc}
		 		0 & v_m\\
		 		v_m & 0
		 	\end{array}\right),\\[0.3cm]
		 	&B_m=\left(\begin{array}{cc}
		 		(-1)^{m+1}v_m & 0\\
		 		0 & (-1)^mv_m
		 	\end{array}\right),\\[0.3cm]
		 	&X_m=\left(\begin{array}{cc}
		 		0 & (-1)^m 2v_m^2\\
		 		(-1)^{m+1}2v_m^2 & 0
		 	\end{array}\right)\neq 0.
		 \end{aligned}
		 $$
		 We proceed by induction on $m$. For $m=1$, we set $v_1=u$. Then, we have $0\ne v_1=v_1^\natural$ and $ X_1=[A_1,B_1]=\left(\begin{array}{cc}
		 0 & -2u^2\\
		 2u^2 & 0
		 \end{array}\right)\neq 0$. Thus, the claim is true for the case $m=1$. Now, assume that the claim is true for $m=i\geq 2$. That means there exist $v_i\in \mathcal{A}$ such that $0\neq v_i=-v_i^\natural $ and
		 $$ 
		 \begin{aligned}
		 	&A_i=\left(\begin{array}{cc}
		 		0 & v_i\\
		 		v_i & 0
		 	\end{array}\right),\\[0.3cm]
		 	&B_i=\left(\begin{array}{cc}
		 		(-1)^{i+1}v_i & 0\\
		 		0 & (-1)^iv_i
		 	\end{array}\right),\\[0.3cm]
		 	&X_i=\left(\begin{array}{cc}
		 		0 & (-1)^i 2v_i^2\\
		 		(-1)^{i+1}2v_i^2 & 0
		 	\end{array}\right)\neq 0.
		 \end{aligned}
		 $$
		 Now, we prove the claim for the case $m=i+1$. A straightforward calculation shows that 
		  $$ 
		 \begin{aligned}
		 	&A_{i+1}=\left(\begin{array}{cc}
		 		0 & 4v_i^3\\
		 		4v_i^3 & 0
		 	\end{array}\right),\\[0.3cm]
		 	&B_{i+1}=\left(\begin{array}{cc}
		 		(-1)^i4v_i^3 & 0\\
		 		0 & (-1)^{i+1}4v_i^3
		 	\end{array}\right),\\[0.3cm]
		 	&X_{i+1}=\left(\begin{array}{cc}
		 		0 & (-1)^{i+1} 32v_i^6\\
		 		(-1)^{i+2} 32v_i^6 & 0
		 	\end{array}\right)\neq 0.
		 \end{aligned}
		 $$
		 Now, put $v_{i+1}=4v_i^3$. Then, we have 
		 $$
		 v_{i+1}^\natural=(4v_i^3)^\natural =4(v_i^\natural)^3=-4v_i^3=-v_{i+1}\ne0.
		 $$
		 Therefore, the claim is true for $m=i+1$. The claim is proved.  It follows that $X_m\neq 0$ for all $m\geq 1$. In other words, we have $\mathbf{K}_{\mathbb{M}_2(\mathcal{A})}^{(m)}$ for all $m\geq 1$, which means that $\mathbf{K}_{\mathbb{M}_2(\mathcal{A})}$ is not Lie solvable.		
	\end{proof}
	
	In the remainder of this section, we record two corollaries of Proposition \ref{proposition _involution} for further use. In the proposition, let $A=K$ and $^\natural$ be the trivial involution on $K$. Then, the extended involution $^\natural$ on $\mathbb{M}_n(K)$ given by (\ref{equation_involution}) is just the transpose operation on $\mathbb{M}_n(K)$; that is, for every matrix $(a_{ij})_{n\times n}\in \mathbb{M}_n(K)$, we have 
	\begin{equation}\label{equation_involution on matrix ring over K}
		\left( (a_{ij})_{n\times n}\right) ^\natural = \left( a_{ji}\right) _{n\times n}.
	\end{equation}
	With respect to this involution, the Lie algebra $\mathbf{K}_{\mathbb{M}_n(K)}$ consists precisely of skew-symmetric matrices in $\mathbb{M}_n(K)$:
	$$
	\mathbf{K}_{\mathbb{M}_n(K)}=\left\lbrace (a_{ij}): a_{ji} = -a_{ij} \text{ for all } 1\leq i,j\leq n\right\rbrace .
	$$
	As a consequence of Proposition \ref{proposition _involution}, we easily obtain the following:
	\begin{corollary}\label{lem.matrix.field}
		Let $K$ be a field, and $n\geq 2$ an integer. Let $\mathbf{K}_{\mathbb{M}_n(K)}$ be the Lie subalgebra of  skew-symmetric matrices of $\mathbb{M}_n(K)^-$. The following assertions hold:
		\begin{enumerate}[font=\normalfont]
			\item\label{lem.matrix.field.item1} Suppose that $n=2$. If ${\rm char}(K)\ne 2$, then $[\mathbf{K}_{\mathbb{M}_n(K)},\mathbf{K}_{\mathbb{M}_n(K)}]=0$; and, if ${\rm char}(K) = 2$, then $\mathbf{K}_{\mathbb{M}_n(K)}$ is Lie solvable of index $2$ but not Lie nilpotent. 
			\item\label{lem.matrix.field.item2} If $n\geq 3$, then  $\mathbf{K}_{\mathbb{M}_n(K)}$ is not Lie solvable.
		\end{enumerate}
	\end{corollary}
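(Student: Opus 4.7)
The plan is to deduce everything directly from Proposition \ref{proposition _involution} by specializing to $\mathcal{A}=K$ equipped with the trivial involution $^\natural$. Note that $K$ is a field, so in particular a unital $K$-algebra that is an integral domain, and thus the hypotheses of Proposition \ref{proposition _involution} are satisfied. Moreover, under (\ref{equation_involution on matrix ring over K}) the induced involution on $\mathbb{M}_n(K)$ is the transpose, so $\mathbf{K}_{\mathbb{M}_n(K)}$ is indeed the space of skew-symmetric matrices, which matches the description in the statement of the corollary.

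For item (\ref{lem.matrix.field.item2}), I would simply invoke part (a) of Proposition \ref{proposition _involution}: when $n\geq 3$, $\mathbf{K}_{\mathbb{M}_n(\mathcal{A})}$ is not Lie solvable for any unital integral domain $\mathcal{A}$, and taking $\mathcal{A}=K$ gives the claim.

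For item (\ref{lem.matrix.field.item1}), I would split on the characteristic of $K$. If $\operatorname{char}(K)\neq 2$, then since the involution on $\mathcal{A}=K$ is trivial, the first assertion of part (d) of Proposition \ref{proposition _involution} gives directly $[\mathbf{K}_{\mathbb{M}_2(K)},\mathbf{K}_{\mathbb{M}_2(K)}]=0$. If $\operatorname{char}(K)=2$, then the second sentence of part (c) of Proposition \ref{proposition _involution} (the trivial-involution case) yields that $\mathbf{K}_{\mathbb{M}_2(K)}$ is Lie solvable of index exactly $2$, while part (b) of the same proposition (whose hypothesis is only $\operatorname{char}(K)=2$) shows that $\mathbf{K}_{\mathbb{M}_2(K)}$ is not Lie nilpotent.

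Since each assertion of the corollary is an immediate specialization of a clause already established in Proposition \ref{proposition _involution}, there is no real obstacle; the only bookkeeping is to verify that the induced involution on $\mathbb{M}_n(K)$ really is the transpose (which is what (\ref{equation_involution on matrix ring over K}) records) and to match each case of the corollary with the correct subclause (a), (b), (c), or (d) of the proposition.
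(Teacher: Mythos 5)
Your proposal is correct and matches the paper's (implicit) argument exactly: the paper states the corollary as an immediate consequence of Proposition \ref{proposition _involution} obtained by taking $\mathcal{A}=K$ with the trivial involution, and your case-matching --- part (a) for $n\geq 3$, part (d) for $n=2$ with ${\rm char}(K)\neq 2$, and parts (c) and (b) for $n=2$ with ${\rm char}(K)=2$ --- is precisely the intended specialization.
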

	
	\medskip 
	
	For any field $K$, let $K[x,x^{-1}]$ be the \textit{Laurent polynomial $K$-algebra} in $x$; that is,
	$$
	K[x,x^{-1}]=\left\lbrace \sum_{i=k}^{n}a_ix^i\;|\; a_i\in K \text{ and } k\geq n\in\mathbb{Z}\right\rbrace .
	$$
	Then, the map $^\natural: K[x,x^{-1}]\to K[x,x^{-1}]$ defined by $(\sum_{i}a_ix^i)^\natural=\sum_{i}a_ix^{-i}$ is a non-trivial involution on $K[x,x^{-1}]$. Now, in Proposition \ref{proposition _involution}, we consider the case where $\mathcal{A}=K[x,x^{-1}]$ and extend the involution $^\natural$ on $K[x,x^{-1}]$ to the matrix ring $\mathbb{M}_n(K[x,x^{-1}])$ as defined in (\ref{equation_involution}). This extended involution $^\natural: \mathbb{M}_n(K[x,x^{-1}])\to \mathbb{M}_n(K[x,x^{-1}])$ is precisely given by the following rule:
	\begin{equation}\label{equation_involution on Laurent polynomial}
		\left( (f_{ij}(x))_{n\times n}\right) ^\natural = \left( f_{ji}(x^{-1})\right) _{n\times n}
	\end{equation}
	for each matrix $(f_{ij}(x))\in \mathbb{M}_n(K[x,x^{-1}])$. With respect to this involution, we easily get the following corollary to Proposition \ref{proposition _involution}:
	
	\begin{corollary}\label{lem.matrix.LaurentPolyRing}
			Let $K[x,x^{-1}]$ be the \textit{Laurent polynomial $K$-algebra}. Let $\mathbf{K}_{\mathbb{M}_n(K[x,x^{-1}])}$ be the Lie subalgebra of skew-symmetric elements of $\mathbb{M}_n(K[x,x^{-1}])^-$ with respect to the involution defined by $($\ref{equation_involution on Laurent polynomial}$)$.  Then, the following assertions hold:
				\begin{enumerate}[font=\normalfont]
						\item\label{lem.matrix.LaurentPolyRing.i} If $\mathrm{char}(K)=2$, then $\mathbf{K}_{\mathbb{M}_2(K[x,x^{-1}])}$ is Lie solvable of index $3$ but not Lie nilpotent.
						\item\label{lem.matrix.LaurentPolyRing.ii} If $\mathrm{char}(K)\neq 2$, then $\mathbf{K}_{\mathbb{M}_2(K[x,x^{-1}])}$ is not Lie solvable.
			\end{enumerate}
	\end{corollary}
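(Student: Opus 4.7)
The plan is to note that Corollary \ref{lem.matrix.LaurentPolyRing} is a direct specialization of Proposition \ref{proposition _involution} to the algebra $\mathcal{A} = K[x,x^{-1}]$ equipped with the involution $f(x) \mapsto f(x^{-1})$. So the work reduces to verifying the hypotheses and invoking the correct parts of the proposition.

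First I would check that the three hypotheses on $\mathcal{A}$ are met: $K[x,x^{-1}]$ is a unital $K$-algebra (obvious); it is an integral domain (standard, since it embeds in the field $K(x)$); and the involution $^\natural$ defined by $(\sum a_i x^i)^\natural = \sum a_i x^{-i}$ is indeed non-trivial, for instance $x^\natural = x^{-1} \neq x$. Note also that the extension of $^\natural$ to $\mathbb{M}_n(K[x,x^{-1}])$ described in (\ref{equation_involution on Laurent polynomial}) is exactly the one used in Proposition \ref{proposition _involution}.

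With these verifications in hand, part (\ref{lem.matrix.LaurentPolyRing.i}) follows by combining parts (b) and (c) of Proposition \ref{proposition _involution}: when $\mathrm{char}(K) = 2$, part (c) gives that $\mathbf{K}_{\mathbb{M}_2(K[x,x^{-1}])}$ is Lie solvable of index at most $3$, and since the involution is non-trivial, the argument given in Case 2 of the proof of (c) produces matrices realizing $[X_1,X_2] \neq 0$, forcing the index to be exactly $3$. Non-nilpotency is then immediate from part (b). Part (\ref{lem.matrix.LaurentPolyRing.ii}) is an immediate application of part (d) of the proposition in the non-trivial involution case.

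There is no real obstacle here since the statement is designed to be a corollary; the only subtlety worth flagging is ensuring the reader sees that the involution on $K[x,x^{-1}]$ is non-trivial (so that one applies Case 2 of part (c) and the non-trivial branch of part (d)), rather than the trivial-involution branches. The proof will therefore be short, essentially just citing Proposition \ref{proposition _involution} after the hypothesis check.
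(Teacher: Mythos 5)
Your proposal is correct and matches the paper's intent exactly: the paper gives no separate proof of Corollary \ref{lem.matrix.LaurentPolyRing}, deriving it directly from Proposition \ref{proposition _involution} applied to $\mathcal{A}=K[x,x^{-1}]$ with the non-trivial involution $f(x)\mapsto f(x^{-1})$, which is precisely what you do. Your explicit verification that $K[x,x^{-1}]$ is a unital integral domain and that the involution is non-trivial (so that Case 2 of part (c) and the non-trivial branch of part (d) apply) is exactly the right hypothesis check.
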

	
	\section{Lie Solvability and Jordan Solvability of $L_K(E)$}
	
	 Throughout this section, we will denote by $\mathbf{K}_{L_K(E)}$ and $\mathbf{S}_{L_K(E)}$ the Lie subalgebra of $L_K(E)^-$ and the Jordan subalgebra of $L_K(E)^\circ$with respect to standard involution $^\star$ on $L_K(E)$  as defined before, respectively. The main aim of this section is to provide a necessary and sufficient condition on the graph $E$ and the field $K$ for $\mathbf{K}_{L_K(E)}$ and $\mathbf{S}_{L_K(E)}$ to be Lie and Jordan solvable, respectively. We start this section with a lemma which determines a $K$-base of $L_K(E)$:
	\begin{lemma}[{\cite[Corollary 1.5.12]{Bo_abrams_2017}}]\label{lemma_basic}
		Let $E$ be a graph and $K$ a field. Put $\mathscr{A}=\{\lambda\nu^*\;|\; \lambda,\nu\in{\rm Path}(E) \text{ and }r(\lambda)=r(\nu)\}$. For each $v\in{\rm Reg}(E)$, let $\{e_1^v,\dots,e_{n_v}^v\}$ be an enumeration of the elements of $s^{-1}(v)$. Then, a $K$-basis of $L_K(E)$ is given by the family
		$$
		\mathscr{B}=\mathscr{A}\backslash\{\lambda e_{n_v}^v(e_{n_v}^v)^*\nu^*\;:\;r(\lambda)=r(\nu)=v\in{\rm Reg}(E)\}.
		$$
	\end{lemma}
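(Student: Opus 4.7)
The plan is to establish separately that $\mathscr{B}$ spans $L_K(E)$ as a $K$-vector space and that $\mathscr{B}$ is linearly independent. For spanning, I would argue by induction on the length of a word in the generators $E^0\cup E^1\cup (E^1)^*$ that every monomial reduces to a $K$-linear combination of elements $\lambda\nu^*$ with $\lambda,\nu\in{\rm Path}(E)$ and $r(\lambda)=r(\nu)$. The crucial relation is (CK1), which collapses any adjacency $e^*f$ to $\delta_{e,f}r(e)$ and therefore forces the ``real head, ghost tail'' normal form, while (V), (E1), (E2) enforce the matching of ranges. This shows that $\mathscr{A}$ spans. I would then invoke (CK2) to eliminate forbidden words: whenever $\lambda e_{n_v}^v(e_{n_v}^v)^*\nu^*$ appears with $v\in{\rm Reg}(E)$, rewrite it as $\lambda\nu^*-\sum_{i=1}^{n_v-1}\lambda e_i^v(e_i^v)^*\nu^*$; since each newly produced term is either strictly shorter or involves only ``allowed'' edges $e_i^v$ with $i<n_v$, a well-founded induction shows that the rewriting terminates, and consequently $\mathscr{B}$ spans.

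For linear independence, I would route the argument through the Cohn path algebra $C_K(E)$, obtained from the same generators by imposing only (V), (E1), (E2), (CK1). The same spanning argument as above, now without (CK2), together with a more elementary independence check, shows that $\mathscr{A}$ is already a $K$-basis of $C_K(E)$. Since $L_K(E)\cong C_K(E)/I$, where $I$ is the two-sided ideal generated by $q_v:=v-\sum_{i=1}^{n_v}e_i^v(e_i^v)^*$ for $v\in{\rm Reg}(E)$, it suffices to prove that $C_K(E)=K\mathscr{B}\oplus I$. To that end, I would define a $K$-linear projection $\pi\colon C_K(E)\to K\mathscr{B}$ that replaces every basis vector of $C_K(E)$ lying outside $\mathscr{B}$ by the right-hand side of the corresponding (CK2)-substitution, and then verify that $\pi$ annihilates every generator $a q_v b$ of $I$.

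The main obstacle is verifying that $\pi$ is well-defined and kills $I$. This is essentially a confluence check in the style of Bergman's diamond lemma: there are finitely many critical overlaps — (CK2) applied at two different positions inside a single monomial, and (CK2) interacting with a (CK1) cancellation immediately to its right — and each overlap must be shown to reduce to the same element of $K\mathscr{B}$ regardless of the order in which rewrites are performed. Once confluence and termination are established, the decomposition $C_K(E)=K\mathscr{B}\oplus I$ follows, identifying $\mathscr{B}$ with a $K$-basis of the quotient $L_K(E)$.
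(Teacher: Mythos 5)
The paper offers no proof of this lemma: it is quoted verbatim from \cite[Corollary~1.5.12]{Bo_abrams_2017}, so there is no in-paper argument to compare against. Your outline is a correct reconstruction of the standard proof given in that reference, and it follows the same route (basis of the Cohn path algebra $C_K(E)$, then analysis of the quotient by the ideal $I$ generated by the elements $q_v=v-\sum_{e\in s^{-1}(v)}ee^*$). One remark that simplifies your third paragraph: the confluence check you anticipate is essentially vacuous. A monomial $\lambda\nu^*\in\mathscr{A}$ is ``forbidden'' if and only if the last edges of $\lambda$ and of $\nu$ are both equal to $e_{n_v}^v$ for $v$ the common source of those last edges; this data is determined uniquely by the monomial, so the (CK2)-rewrite is a well-defined \emph{function} on the basis $\mathscr{A}$ and there are no overlapping reductions to reconcile. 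Concretely, since $\lambda q_v\nu^*=\lambda\nu^*-\sum_{i=1}^{n_v}\lambda e_i^v(e_i^v)^*\nu^*$ expresses each element of $\{\lambda q_v\nu^*\}$ as $\pm\bigl(\lambda e_{n_v}^v(e_{n_v}^v)^*\nu^*\bigr)$ plus terms from $\mathscr{B}$ or shorter monomials, the set $\{\lambda q_v\nu^*\}\cup\mathscr{B}$ is obtained from $\mathscr{A}$ by a unitriangular change of basis and is therefore itself a basis of $C_K(E)$; the identities $q_vw=\delta_{v,w}q_v=wq_v$, $q_ve=0$ and $e^*q_v=0$ (the latter two from (CK1) and (E1)--(E2)) show that ${\rm span}_K\{\lambda q_v\nu^*\}$ is already a two-sided ideal, hence equals $I$, giving $C_K(E)=K\mathscr{B}\oplus I$ directly. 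The only step in your sketch that genuinely requires work is the linear independence of $\mathscr{A}$ in $C_K(E)$, which you leave as ``a more elementary independence check''; this is where a faithful representation (or the diamond lemma applied to $C_K(E)$ rather than to $L_K(E)$) is actually needed, as in \cite[Proposition~1.5.6]{Bo_abrams_2017}.
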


	The next lemma provides a necessary condition for $\mathbf{K}_{L_K(E)}$ to be Lie solvable.  The ideas used to prove this lemma come from the proof of \cite[Lemma 2.2 and Theorem 2.3]{nam-zhang-22}. It can be said that, in $L_K(E)$, we can easily choose a suitable subalgebra which is isomorphic to the matrix ring $\mathbb{M}_3(K)$. Then, Corollary \ref{lem.matrix.field} is applied to get all possible cases for $\mathbf{K}_{L_K(E)}$ to be Lie solvable. For convenience of the readers, we still include the proof here.  
	\begin{lemma}\label{lemma_solvability_matrix}
		Let $L_K(E)$ be the Leavitt path algebra of a graph $E$ with coefficients in a field $K$. Then, $\mathbf{K}_{L_K(E)}$ is not Lie solvable if one of the following cases occurs:
		\begin{enumerate}[font=\normalfont]
			\item $E$ contains a cycle with an exit.
			\item $E$ contains one of following subgraphs:
				\begin{figure}[!htb]
				\begin{minipage}{0.33\textwidth}
					\begin{tikzpicture}
						\node at (0,0) (1) {$F_1:$};
						\node at (0.5,0) (0) {$\bullet$};
						\node at (2,0) (1) {$\bullet$};
						\node at (3.5,0) (2) {$\bullet$};
						\node at (3.5,-0.3) {$v$};
						
						\draw [->] (0) to node[above] {$e$} (1);
						\draw [->] (1) to node[above] {$f$} (2);
					\end{tikzpicture}
				\end{minipage}\hfill
				\begin{minipage}{0.33\textwidth}
					\begin{tikzpicture}
						\node at (0,0) (1) {$F_2:$};
						\node at (0.5,0) (0) {$\bullet$};
						\node at (2,0) (1) {$\bullet$};
						\node at (3.5,0) (2) {$\bullet$};
						\node at (2,-0.3) {$v$};
						
						\draw [->] (0) to node[above] {$e$} (1);
						\draw [->] (2) to node[above] {$f$} (1);
					\end{tikzpicture}
				\end{minipage}\hfill
				\begin{minipage}{0.3\textwidth}
				\begin{tikzpicture}
					\node at (0,0) (1) {$F_3:$};
					\node at (0.5,0) (0) {$\bullet$};
					\node at (2,0) (2) {$\bullet$};
					\node at (2.3,0) {$v$};
					
					\draw [->] (0) to [out=60,in=120,looseness=1] node[above] {$e$} (2);
					\draw [->] (0) to [out=-60,in=-120,looseness=1] node[above] {$f$} (2);
				\end{tikzpicture}
				\end{minipage}
			\end{figure}
		\end{enumerate}
	\end{lemma}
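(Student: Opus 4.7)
The plan is to find, in each of the listed cases, three paths $\lambda_1,\lambda_2,\lambda_3\in\mathrm{Path}(E)$ that all end at a common vertex $w$ and satisfy the orthogonality relations $\lambda_i^*\lambda_j=\delta_{ij}\,w$. Given such a triple, a direct computation using (E1), (E2) and (CK1) shows that the elements $E_{ij}:=\lambda_i\lambda_j^*$ satisfy $E_{ij}E_{kl}=\delta_{jk}E_{il}$, so their $K$-span is a unital subalgebra of $L_K(E)$ isomorphic to $\mathbb{M}_3(K)$. The identity $(E_{ij})^{\star}=\lambda_j\lambda_i^*=E_{ji}$ shows that the standard involution restricts to the transpose on this copy of $\mathbb{M}_3(K)$, so its skew-symmetric part forms a Lie subalgebra of $\mathbf{K}_{L_K(E)}$ isomorphic to $\mathbf{K}_{\mathbb{M}_3(K)}$, which is not Lie solvable by Corollary~\ref{lem.matrix.field}. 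Hence $\mathbf{K}_{L_K(E)}$ itself cannot be Lie solvable. The entire proof is therefore reduced to producing the required triple of paths in each case.

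For case~(2) the required triples can be read off the pictures. In $F_1$ I would take $(\lambda_1,\lambda_2,\lambda_3)=(v,f,ef)$, all of which end at $v$; the orthogonality relations follow at once from (CK1) using $v\neq s(f)$, $v\neq s(e)$ and $e\neq f$. In $F_2$ and $F_3$ the triple $(v,e,f)$ does the job for the same reasons, since in both graphs $v\neq s(e), s(f)$ and $e\neq f$.

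For case~(1), let $c=c_1\cdots c_n$ be a cycle at $v$ with an exit $e$. Fix $k$ with $s(e)=s(c_k)$ and $e\neq c_k$, set $\alpha=c_1\cdots c_{k-1}$ (reading $\alpha=v$ when $k=1$) and $w=r(e)$. I propose the three paths $\lambda_i:=c^{i-1}\alpha e$ for $i=1,2,3$, each of which ends at $w$. The diagonal entries $\lambda_i^*\lambda_i$ collapse to $e^*e=w$ via the identities $c^{*(i-1)}c^{i-1}=v$ and $\alpha^*\alpha=s(c_k)$. For the off-diagonal entries I would use the two cancellations $\alpha^*c=c_k c_{k+1}\cdots c_n$ and $c^*\alpha=c_n^* c_{n-1}^*\cdots c_k^*$, both obtained by telescoping along the cycle with (CK1), to rewrite every product $\lambda_i^*\lambda_j$ with $i\neq j$ as an expression containing the factor $e^*c_k$ or $c_k^*e$; both vanish by (CK1) because $e\neq c_k$.

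The hardest point is the telescoping bookkeeping in case~(1): after the cancellations along the cycle one must verify that the surviving ghost factor coming from $\alpha^* c^m$ (respectively $c^{*m}\alpha$) always meets $e$ (respectively $e^*$) at the offending edge $c_k$, so that (CK1) forces the whole product to zero. This is the only non-trivial step of the argument; once it is carried out, the $\mathbb{M}_3(K)$-template above together with Corollary~\ref{lem.matrix.field} closes the proof for all four configurations.
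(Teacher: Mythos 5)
Your proposal is correct and takes essentially the same route as the paper: in each case one exhibits paths $\lambda_1,\lambda_2,\lambda_3$ ending at a common vertex with $\lambda_i^*\lambda_j=\delta_{ij}w$, so that the $\lambda_i\lambda_j^*$ span a copy of $(\mathbb{M}_3(K),{}^\natural)$ on which $^\star$ acts as the transpose, and then Corollary~\ref{lem.matrix.field} applies. Your treatment of case~(1) is in fact slightly more careful than the paper's, which writes the matrix units as $c^mff^*(c^*)^n$ and thereby tacitly re-bases the cycle so that the exit leaves $s(c)$, whereas your factor $\alpha=c_1\cdots c_{k-1}$ handles an exit at an arbitrary vertex of the cycle explicitly.
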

	
	\begin{proof}
	(1) Assume that $E$ contains a cycle $c$ with an exit $f$. Then, according to Lemma \ref{lemma_basic}, the set $\{c^mff^*(c^*)^n: 1\leq m,n\leq 3\}$ is linearly independent over $K$ satisfying the rule $$(c^mff^*(c^*)^i)(c^jff^*(c^*)^n)=\delta_{ij}c^mff^*(c^*)^n,$$
 	where $\delta_{ij}$ is the Kronecker delta. It follows that the $K$-subalgebra of $L_K(E)$, say $A$, generated by $\{c^mff^*(c^*)^n: 1\leq m,n\leq 3\}$ is isomorphic to $\mathbb{M}_3(K)$. The restriction of the standard involution $^\star$ on $L_K(E)$ to $A$ is still an involution on $A$ which satisfies  $\mathbf{K}_{A}\subseteq \mathbf{K}_{L_K(E)}$. Let $^\natural$ be the involution on $\mathbb{M}_3(K)$ as defined in (\ref{equation_involution on matrix ring over K}). Then, it can be checked that ($A,^\star$) is isomorphic to $(\mathbb{M}_3(K),^\natural)$ as $K$-algebras with involution. Thus, we have that $\mathbf{K}_{A}$ is isomorphic to  $\mathbf{K}_{\mathbb{M}_3(K)}$ as Lie algebras over $K$. Because $\mathbf{K}_{\mathbb{M}_3(K)}$ is not Lie solvable (see Corollary \ref{lem.matrix.field}), it follows that $\mathbf{K}_{A}$, and hence $\mathbf{K}_{L_K(E)}$ is not Lie solvable.
 
 	(2) For the proof of this assertion, we assume first that $E$ contains $F_1$. Then, $v, f, ef$ are all paths ending at $v$. It is straightforward to check the subalgebra $B$ of $L_K(E)$ generated by $v$, $f$, $ef$, $f^*$, $f^*e^*$, $ff^*$, $eff^*$, $ff^*e^*$, $eff^*e^*$ is isomorphic to $\mathbb{M}_3(K)$ via map: $eff^*e^*\mapsto E_{11}$, $ff^*\mapsto E_{22}$, $v\mapsto E_{33}$, $eff^*\mapsto E_{12}$, $ff^*e^*\mapsto E_{21}$, $f\mapsto E_{23}$, $f^*\mapsto E_{32}$, $ef\mapsto E_{13}$, and $f^*e^*\mapsto E_{31}$ (see also a similar techniques used in the proof of  \cite[Lemma 2.6.4]{Bo_abrams_2017}). The restriction of the involution $^\star$ to $B$ is also an involution on $B$ for which $\mathbf{K}_{B}\subseteq \mathbf{K}_{L_K(E)}$. Thus, $\mathbf{K}_{B}$ is isomorphic to $\mathbf{K}_{\mathbb{M}_3(K)}$ as Lie algebras over $K$. Therefore, by Corollary \ref{lem.matrix.field}, we conclude that $\mathbf{K}_{B}$, and so $\mathbf{K}_{L_K(E)}$ is not Lie solvable. 
	
	Finally, assume that $E$ contains $F_2$ or $F_3$. Then, the subalgebra $C$ generated by $v, e, f, e^*, f^*, ee^*, ff^*, ef^*, fe^*$ is isomorphic to $\mathbb{M}_3(K)$ via map $ee^*\mapsto E_{11}$, $ff^*\mapsto E_{22}$, $v\mapsto E_{33}$, $ef^*\mapsto E_{12}$, $fe^*\mapsto E_{21}$, $e\mapsto E_{13}$, $e^*\mapsto E_{31}$, $f\mapsto E_{23}$, $f^*\mapsto E_{32}$. A similar argument as in previous paragraph shows that $\mathbf{K}_{L_K(E)}$ is not Lie solvable too. The proof is now completed.
	\end{proof}
	As a consequence of Lemma \ref{lemma_solvability_matrix}, we have the following remark.
	\begin{remark}\label{corollary_E_1-E_6}
		Let $L_K(E)$ be the Leavitt path algebra of a graph $E$ with coefficients in a field $K$. If $\mathbf{K}_{L_K(E)}$ is Lie solvable then $E$ is a disjoint union of following graphs:
		\begin{figure}[!htb]
			\begin{minipage}{0.33\textwidth}
				\begin{tikzpicture}
					\node at (0,0) (1) {$E_1:$};
					\node at (0.5,0) (0) {$\bullet$};
				\end{tikzpicture}
			\end{minipage}\hfill
			\begin{minipage}{0.33\textwidth}
				\begin{tikzpicture}
					\node at (0,0) (1) {$E_2:$};
					\node at (0.5,0) (0) {$\bullet$};
					\draw [->] (0) to [out=45,in=-45,looseness=5] (0);
				\end{tikzpicture}
			\end{minipage}\hfill
			\begin{minipage}{0.33\textwidth}
				\begin{tikzpicture}
					\node at (0,0) (1) {$E_3:$};
					\node at (0.5,0) (0) {$\bullet$};
					\node at (2,0) (2) {$\bullet$};
					
					\draw [->] (0) to [out=60,in=120,looseness=1] (2);
					\draw [->] (2) to [out=-120,in=-60,looseness=1]  (0);
				\end{tikzpicture}
			\end{minipage}\vfill
			\begin{minipage}{0.33\textwidth}
				\begin{tikzpicture}
					\node at (-1.5,0) (E4) {$E_4:$};
					\node at (0,0) (0;0) {$\bullet$};
					\node at (-0.3,0) (u) {$u$};
					\node at (0,1) (0;1) {$\bullet$};
					\node at (1,1) (1;1) {$\bullet$};
					\node at (1,0) (1;0) {$\bullet$};
					\node at (0,1.3) (u) {$u_i$};
					\node at (-1,-1) (-1;-1) {$\bullet$};
					\node at (-0.6,-0.2) (-0.5;-0.5) {\color{red}$\ddots$};
					\node at (0,-1) (0;-1) {$\bullet$};
					\node at (1,-1) (1;-1) {$\bullet$};
					
					\draw [->, red] (0;0) to (1;0);
					\draw [->, red] (0;0) to (1;1);
					\draw [->, red] (0;0) to (0;1);
					\draw [->, red] (0;0) to (1;-1);
					\draw [->, red,densely dotted] (0;0) to (0;-1);
					\draw [->, red,densely dotted] (0;0) to (-1;-1);
				\end{tikzpicture}
			\end{minipage}\hfill
			\begin{minipage}{0.33\textwidth}
				\begin{tikzpicture}
					\node at (-1.5,0) (E4) {$E_5:$};
					\node at (0,0) (0;0) {$\bullet$};
					\node at (-0.3,0) (v) {$v$};
					\node at (0,1) (0;1) {$\bullet$};
					\node at (1,1) (1;1) {$\bullet$};
					\node at (1,0) (1;0) {$\bullet$};
					\node at (-0.3,1) (v) {$v_j$};
					\node at (-1,-1) (-1;-1) {$\bullet$};
					\node at (-0.6,-0.2) (-0.5;-0.5) {\color{blue}$\ddots$};
					\node at (0,-1) (0;-1) {$\bullet$};
					\node at (1,-1) (1;-1) {$\bullet$};
					
					\draw [->, blue] (0;0) to (1;0);
					\draw [->, blue] (1;0) to [out=45,in=-45,looseness=5] (1;0);
					\draw [->, blue] (0;0) to (1;1);
					\draw [->, blue] (1;1) to [out=90,in=0,looseness=8] (1;1);
					\draw [->, blue] (0;0) to (0;1);
					\draw [->, blue] (0;1) to [out=135,in=45,looseness=5] (0;1);
					\draw [->, blue] (0;0) to (1;-1);
					\draw [->, blue] (1;-1) to [out=0,in=-90,looseness=8] (1;-1);
					\draw [->, blue,densely dotted] (0;0) to (0;-1);
					\draw [->, blue] (0;-1) to [out=-135,in=-45,looseness=5] (0;-1);
					\draw [->, blue,densely dotted] (0;0) to (-1;-1);
					\draw [->, blue] (-1;-1) to [out=-180,in=-90,looseness=8] (-1;-1);
				\end{tikzpicture}
			\end{minipage}\hfill
			\begin{minipage}{0.33\textwidth}
				\begin{tikzpicture}
					\node at (-1.5,0) (E6) {$E_6:$};
					\node at (0,0) (0;0) {$\bullet$};
					\node at (-0.3,0) (w) {$w$};
					\node at (0,1) (0;1) {$\bullet$};
					\node at (1,1) (1;1) {$\bullet$};
					\node at (0.6,0.4) (rddots) {\color{red}$\ddots$};
					\node at (-1,1.3) (u) {$u_i$};
					\node at (-1,1) (-1;1) {$\bullet$};
					\node at (-1,-1) (-1;-1) {$\bullet$};
					\node at (1,-0.7) (v) {$v_j$};
					\node at (-0.6,-0.2) (lddots) {\color{blue}$\ddots$};
					\node at (0,-1) (0;-1) {$\bullet$};
					\node at (1,-1) (1;-1) {$\bullet$};
					
					\draw [->,red,densely dotted] (0;0) to (1;1);
					\draw [->,red] (0;0) to (-1;1);
					\draw [->,red] (0;0) to (0;1);
					\draw [->,blue] (0;0) to (1;-1);
					\draw [->,blue] (1;-1) to [out=0,in=-90,looseness=8] (1;-1);
					\draw [->,blue] (0;0) to (0;-1);
					\draw [->,blue] (0;-1) to [out=-135,in=-45,looseness=5] (0;-1);
					\draw [->,blue,densely dotted] (0;0) to (-1;-1);
					\draw [->,blue] (-1;-1) to [out=-180,in=-90,looseness=8] (-1;-1);
				\end{tikzpicture}
			\end{minipage}
		\end{figure}\\
		
	\noindent in which $\{u_i|i\in I\}$ and $\{v_j|j\in J\}$, where $I$ and $J$ are indexing non-empty sets of arbitrary cardinality.
	\end{remark}
	
	\begin{remark}\label{remark_finite cases}
		Let $E_4, E_5, E_6$ be graphs as defined above. If $m:=|I|$ and $n:=|J|$ are finite, then by \cite[Corollary 2.7.5]{Bo_abrams_2017}, we get that 
		$$ 
		\begin{aligned}
			& L_K(E_1)\cong K,\;\; L_K(E_2)\cong K[x,x^{-1}], \;\; L_K(E_3)\cong \mathbb{M}_2(K[x,x^{-1}]),\\
			& L_K(E_4)\cong \bigoplus_{i=1}^n \mathbb{M}_2(K),\;\; L_K(E_5)\cong \bigoplus_{i=1}^m \mathbb{M}_2(K[x,x^{-1}]),\\
			&  L_K(E_6)\cong\bigoplus_{i=1}^n \mathbb{M}_2(K) \oplus \bigoplus_{i=1}^m \mathbb{M}_2(K[x,x^{-1}]).
		\end{aligned}
		$$
	\end{remark}
	
	In what follows, we give a sufficient condition for $\mathbf{K}_{L_K(E)}$ to be Lie solvable. We proceed by proving several auxiliary lemmas.
	\begin{lemma}\label{lemma_sum}
		Let $\mathcal{A}$ be a $K$-algebra with an involution $\star$, and $\mathbf{J}$ be an indexing set of arbitrary cardinality. Let $I_j, j\in \mathbf{J}$, be ideals of $\mathcal{A}$ satisfying the property that for each $j$, if $a\in I_j$ then $a^{\star}\in I_j$. Put $M=\sum_{j\in\mathbf{J}}I_j$. If $M=\bigoplus_{j\in\mathbf{J}}I_j$, then $\mathbf{K}_M=\bigoplus_{j\in\mathbf{J}}\mathbf{K}_{I_j}$. Consequently, $\mathbf{K}_M$ is Lie solvable $($nilpotent$)$ if and only if $\mathbf{K}_{I_j}$ is Lie solvable $($nilpotent$)$ of a common index for all $j\in\mathbf{J}$.
	\end{lemma}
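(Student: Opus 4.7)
The plan is to split the lemma into two independent facts and chain them. First I would establish the set-theoretic equality $\mathbf{K}_M=\bigoplus_{j\in\mathbf{J}}\mathbf{K}_{I_j}$; then I would exploit the fact that a direct sum of ideals forces pairwise-zero products in order to reduce the Lie series of $\mathbf{K}_M$ to the direct sum of the Lie series of the $\mathbf{K}_{I_j}$.

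For the first equality, take any $x\in\mathbf{K}_M$. Since $M=\bigoplus_{j\in\mathbf{J}}I_j$, the element $x$ has a unique decomposition $x=\sum_{j}x_j$ with $x_j\in I_j$ and finitely many $x_j$ nonzero. The hypothesis that each $I_j$ is $\star$-invariant yields $x^\star=\sum_j x_j^\star$ with $x_j^\star\in I_j$; comparing this with $x^\star=-x=\sum_j(-x_j)$ and invoking uniqueness of the direct-sum expression forces $x_j^\star=-x_j$ for every $j$, so $x_j\in\mathbf{K}_{I_j}$. The reverse inclusion $\bigoplus_j\mathbf{K}_{I_j}\subseteq\mathbf{K}_M$ is immediate because $\star$ is additive. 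This gives $\mathbf{K}_M=\bigoplus_{j\in\mathbf{J}}\mathbf{K}_{I_j}$.

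For the second assertion, the key algebraic observation is that whenever $j\neq k$ one has $I_j I_k\subseteq I_j\cap I_k\subseteq I_j\cap\sum_{i\neq j}I_i=0$, using that each $I_j$ is a two-sided ideal and that the sum is direct. Consequently $[\mathbf{K}_{I_j},\mathbf{K}_{I_k}]=0$ for all $j\neq k$, and a straightforward induction on $n$ shows that both the derived series and the lower central series respect the direct sum decomposition, namely $\mathbf{K}_M^{(n)}=\bigoplus_{j}\mathbf{K}_{I_j}^{(n)}$ and $\mathbf{K}_M^{n}=\bigoplus_{j}\mathbf{K}_{I_j}^{n}$. Therefore $\mathbf{K}_M^{(n)}=0$ (resp.\ $\mathbf{K}_M^n=0$) if and only if $\mathbf{K}_{I_j}^{(n)}=0$ (resp.\ $\mathbf{K}_{I_j}^n=0$) for every $j\in\mathbf{J}$, which is precisely the statement that all the $\mathbf{K}_{I_j}$ are solvable (nilpotent) of a common index bounded by $n$.

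The main subtlety, and the only thing worth pausing on, is the role of the word \emph{common}: individual solvability of each $\mathbf{K}_{I_j}$ is not enough when $\mathbf{J}$ is infinite, because the derived series is computed in $\mathbf{K}_M$ with a single exponent $n$ applied simultaneously to every summand. This is why the right-hand side of the equivalence must require the indices to be uniformly bounded. Everything else is bookkeeping on the direct sum and a routine induction, so I do not anticipate any real obstacle beyond stating the induction cleanly.
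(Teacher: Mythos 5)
Your proof is correct and follows essentially the same route as the paper: decompose a skew-symmetric element along the direct sum, use the $\star$-invariance of each $I_j$ together with directness to conclude each component is itself skew-symmetric, and then observe that the derived and lower central series split across the summands, so solvability (nilpotency) of $\mathbf{K}_M$ is equivalent to uniform solvability (nilpotency) of the $\mathbf{K}_{I_j}$. The only difference is cosmetic: you justify the splitting of the series via $I_jI_k=0$ for $j\neq k$, a detail the paper asserts without proof, and your closing remark about the necessity of a \emph{common} index is exactly the point of the paper's formulation.
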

	\begin{proof}
		Let $x\in \mathbf{K}_M$. Then, we may write $x=a_{j_1}+a_{j_2}+\dots+a_{j_n}$, where $a_{j_k}\in I_{j_k}$ for all $1\leq k\leq n$. The equation $x^{\star}=-x$ implies that 
		$$
		a_{j_1}+a_{j_1}^{\star}=-(a_{j_2}+a_{j_2}^{\star}+\cdots+a_{j_n}+a_{j_n}^{\star}).
		$$
		Because $a_{j_k}\in I_{j_k}$ for all $i$, the above equation shows that $a_{j_1}+a_{j_1}^{\star}\in I_{j_1}\cap (I_{j_2}\oplus\cdots\oplus I_{j_n})=0$. It follows that $a_1+a_1^{\star}=0$, and so $a_{j_1}\in \mathbf{K}_{I_{j_1}}$. Similarly, we also obtain that $a_i\in \mathbf{K}_{I_{j_k}}$ for $k\in\{2,\dots,n\}$. In other words, we have $\mathbf{K}_M\subseteq\sum_{j\in\mathbf{J}}\mathbf{K}_{I_j}=\bigoplus_{j\in\mathbf{J}}\mathbf{K}_{I_j}$. The inverse inclusion is clear. The final assertion follows from the fact that $\mathbf{K}_M^{(n)}=\bigoplus_{j\in\mathbf{J}}\mathbf{K}_{I_j}^{(n)}$ and $\mathbf{K}_M^n=\bigoplus_{j\in\mathbf{J}}\mathbf{K}_{I_j}^n$ for all integer $n\geq1$.
	\end{proof}
	
	\begin{lemma}\label{lemma_Liesolvable of E1,2,3}
		Let $K$ be a field, and $E_1, E_2, E_3$ be graphs as defined in Remark \ref{corollary_E_1-E_6}. Then, the following assertions hold:
		\begin{enumerate}[font=\normalfont]
			\item $\mathbf{K}_{L_K(E_1)}$ and $\mathbf{K}_{L_K(E_2)}$ are Lie solvable of index $1$.
			\item $\mathbf{K}_{L_K(E_3)}$ is Lie solvable $($of index $3$$)$ if and only if $\mathrm{char}(K)=2$. Additionally, $\mathbf{K}_{L_K(E_3)}$ is not Lie nilpotent.
		\end{enumerate}
	\end{lemma}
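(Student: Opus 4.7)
The plan is to reduce each assertion to the concrete descriptions of $L_K(E_1), L_K(E_2), L_K(E_3)$ collected in Remark \ref{remark_finite cases}, and then invoke the corresponding results from Section 3.

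For (1), the algebras $L_K(E_1) \cong K$ and $L_K(E_2) \cong K[x, x^{-1}]$ are commutative, so the Lie bracket vanishes on the whole algebra. In particular $[\mathbf{K}_{L_K(E_i)}, \mathbf{K}_{L_K(E_i)}] = 0$ for $i = 1, 2$, which is precisely Lie solvability of index $1$. No further computation is required.

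For (2), the main task is to promote the ring isomorphism $L_K(E_3) \cong \mathbb{M}_2(K[x,x^{-1}])$ of Remark \ref{remark_finite cases} to an isomorphism of $K$-algebras with involution, where the target carries the involution (\ref{equation_involution on Laurent polynomial}). Writing $E_3$ with vertices $u, v$ and edges $e\colon u \to v$, $f\colon v \to u$, I would define an explicit map $\Phi \colon L_K(E_3) \to \mathbb{M}_2(K[x,x^{-1}])$ on generators by $u \mapsto E_{11}$, $v \mapsto E_{22}$, $e \mapsto E_{12}$, $e^* \mapsto E_{21}$, $f \mapsto xE_{21}$, $f^* \mapsto x^{-1}E_{12}$, and verify the Leavitt relations (V), (E1), (E2), (CK1), (CK2) on the images (for instance $\Phi(ee^*)=E_{11}=\Phi(u)$, $\Phi(ff^*)=E_{22}=\Phi(v)$, and $\Phi(e^*f)=E_{21}\cdot xE_{21}=0$). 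The one slightly delicate point is checking that $\Phi$ intertwines the standard involution on $L_K(E_3)$ with the involution from (\ref{equation_involution on Laurent polynomial}): applying that rule to the images confirms $E_{12}\leftrightarrow E_{21}$ and $xE_{21}\leftrightarrow x^{-1}E_{12}$, matching $e\leftrightarrow e^*$ and $f\leftrightarrow f^*$. By the observation in Subsection 2.3, this yields a Lie algebra isomorphism $\mathbf{K}_{L_K(E_3)} \cong \mathbf{K}_{\mathbb{M}_2(K[x,x^{-1}])}$, and Corollary \ref{lem.matrix.LaurentPolyRing} then delivers both directions of (2) as well as the non-nilpotency assertion.

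The only real obstacle is the involution-compatibility check for $\Phi$; once that is in place, Lemma \ref{lemma_Liesolvable of E1,2,3} is essentially immediate from Corollary \ref{lem.matrix.LaurentPolyRing}.
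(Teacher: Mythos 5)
Your proposal is correct and follows the same route as the paper: commutativity disposes of $E_1,E_2$, and the isomorphism $L_K(E_3)\cong\mathbb{M}_2(K[x,x^{-1}])$ combined with Corollary \ref{lem.matrix.LaurentPolyRing} handles $E_3$. The only difference is that you explicitly verify the isomorphism is one of $K$-algebras \emph{with involution} (matching the standard involution to the rule (\ref{equation_involution on Laurent polynomial})), a step the paper's one-line proof leaves implicit but which is genuinely needed to transfer statements about $\mathbf{K}_{L_K(E_3)}$, and your generator assignment and checks are accurate.
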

	\begin{proof}
		The assertion (i) follows immediately from the fact that $L_K(E_1)$ and $L_K(E_2)$ are commutative. Moreover, because $L_K(E_3)\cong \mathbb{M}_2(K[x,x^{-1}])$, we get that $\mathbf{K}_{L_K(E_3)}$ is not Lie nilpotent.
	\end{proof}
	
	\begin{lemma}\label{lemma_Liesolvable of E4,5,6}
		Let $K$ be a field, and $E_4, E_5, E_6$ be graphs as defined in Remark \ref{corollary_E_1-E_6}. Then, the following assertions hold:
		\begin{enumerate}[font=\normalfont]
			\item If $\mathbf{K}_{L_K(E_4)}$ is Lie solvable, then it is Lie solvable  of index $l\leq3$. More precisely, if $|I|$ is finite and ${\rm char}(K)\ne 2$ $($resp.,  ${\rm char}(K) = 2$$)$, then $l=1$ $($resp., $l=2$$)$. If $|I|$ is infinite and ${\rm char}(K)\ne 2$ $($reps., ${\rm char}(K) = 2$$)$, then $l=2$ $($resp., $l=3$$)$. Additionally, $\mathbf{K}_{L_K(E_4)}$ is Lie nilpotent if and only if $|I|$ is finite and ${\rm char}(K)\ne 2$, if and only if $[\mathbf{K}_{L_K(E_4)},\mathbf{K}_{L_K(E_4)}]=1$.
			\item $\mathbf{K}_{L_K(E_5)}$ is Lie solvable $($of index $l\leq 4$$)$ if and only if ${\rm char}(K)=2$. More precisely, if $|J|$ is finite $($resp.,  infinite$)$, then $l=3$ $($resp., $l=4$$)$. Additionally, $\mathbf{K}_{L_K(E_5)}$ is not Lie nilpotent.
			\item $\mathbf{K}_{L_K(E_6)}$ is Lie solvable $($of index $l\leq 4$$)$ if and only if ${\rm char}(K)=2$. More precisely, if both $|I|$ and $|J|$ are finite $($resp., either $|I|$ or $|J|$ is infinite$)$, then $l=3$ $($resp., $l=4$$)$. Additionally, $\mathbf{K}_{L_K(E_6)}$ is not Lie nilpotent.
		\end{enumerate}
	\end{lemma}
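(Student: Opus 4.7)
The plan is to leverage the structural decomposition in Remark \ref{remark_finite cases} together with Lemma \ref{lemma_sum}, reducing the problem to the matrix-algebra results of Corollaries \ref{lem.matrix.field} and \ref{lem.matrix.LaurentPolyRing}.

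For the finite cases in each of (i)--(iii), Remark \ref{remark_finite cases} applies directly. For $E_4$ with $|I|$ finite, $L_K(E_4)\cong\bigoplus_i\mathbb{M}_2(K)$ as $K$-algebras, and the standard involution on $L_K(E_4)$ restricts summand-wise to the transpose on each $\mathbb{M}_2(K)$; Lemma \ref{lemma_sum} then gives $\mathbf{K}_{L_K(E_4)}\cong\bigoplus_i\mathbf{K}_{\mathbb{M}_2(K)}$, and the exact index is read off from Corollary \ref{lem.matrix.field} (namely $1$ if $\mathrm{char}(K)\ne 2$, and $2$ if $\mathrm{char}(K)=2$). The same reasoning, now using Corollary \ref{lem.matrix.LaurentPolyRing} in place of Corollary \ref{lem.matrix.field}, handles $E_5$ with $|J|$ finite (summands $\mathbb{M}_2(K[x,x^{-1}])$) and $E_6$ with both $|I|,|J|$ finite (where both types of summands appear, and the solvability index is the maximum of the corresponding summand indices). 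The non-Lie-nilpotency claims in (ii) and (iii) are inherited from any single $\mathbb{M}_2(K[x,x^{-1}])$-summand, again via Corollary \ref{lem.matrix.LaurentPolyRing}.

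For the infinite cases, Remark \ref{remark_finite cases} does not apply verbatim, but one can still form the analogous two-sided ideals $I_i=L_K(E_\ast)\,u_i\,L_K(E_\ast)$ (and similarly $J_j$ for the loop-vertices), each finitely generated and hence isomorphic to $\mathbb{M}_2(K)$ or $\mathbb{M}_2(K[x,x^{-1}])$ as appropriate, and each preserved by the standard involution. Their direct sum $\mathcal{I}=\bigoplus_i I_i \oplus \bigoplus_j J_j$ is a \emph{proper} ideal of $L_K(E_\ast)$ because, with the central vertex now an infinite emitter, the relation (CK2) no longer applies there, leaving this vertex as an extra basis element outside $\mathcal{I}$. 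Lemma \ref{lemma_sum} determines $\mathbf{K}_{\mathcal{I}}$ as a direct sum, and I would then enlarge to $\mathbf{K}_{L_K(E_\ast)}$ by a direct computation with the basis of Lemma \ref{lemma_basic}, carefully tracking how the central vertex interacts with the peripheral ideal pieces under the Lie bracket.

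The main obstacle is precisely this infinite case: the cross-brackets between the central vertex and the peripheral ideals contribute new elements to $[\mathbf{K}_{L_K(E_\ast)},\mathbf{K}_{L_K(E_\ast)}]$ beyond $\bigoplus_i[\mathbf{K}_{I_i},\mathbf{K}_{I_i}]\oplus\bigoplus_j[\mathbf{K}_{J_j},\mathbf{K}_{J_j}]$, and this is what should produce the jump of one in the solvability index relative to the finite case. For the non-Lie-nilpotency statements, I would exhibit explicit non-vanishing iterated brackets $[A,{}_m B]$; the natural candidates are analogues of the matrices constructed in the proof of Proposition \ref{proposition _involution}(b), transported into $L_K(E_\ast)$ via the isomorphism with an $\mathbb{M}_2(K[x,x^{-1}])$-summand. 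Careful bookkeeping then verifies the exact indices claimed in each clause.
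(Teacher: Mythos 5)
Your reduction of the finite cases to Corollaries \ref{lem.matrix.field} and \ref{lem.matrix.LaurentPolyRing} via Lemma \ref{lemma_sum} is exactly what the paper does, and that part is sound. The genuine gap is the infinite case, which you leave as a promissory note (``enlarge to $\mathbf{K}_{L_K(E_\ast)}$ by a direct computation \dots carefully tracking how the central vertex interacts''). That computation is the entire content of the lemma in this regime, and the heuristic you offer in its place --- that cross-brackets with the central vertex enlarge the derived subalgebra and thereby ``produce the jump of one'' in the index --- cannot simply be asserted, because it does not hold in general. The central vertex $w$ satisfies $w^{\star}=w$, so it lies in $\mathbf{K}_{L_K(E_\ast)}$ only when $\mathrm{char}(K)=2$; in characteristic $\neq 2$ it contributes nothing to $\mathbf{K}_{L_K(E_\ast)}$ and there are no cross-brackets of the kind you describe. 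Concretely, for $E_4$ in characteristic $\neq 2$ one finds $\mathbf{K}_{L_K(E_4)}=\bigoplus_{i}K(e_i-e_i^{\ast})$ whether $|I|$ is finite or infinite, and this is abelian. What you must actually supply is: (i) the upper bound, which follows from the observation that the $\star$-invariant ideal $\mathcal{I}=\bigoplus_i I_i\oplus\bigoplus_j J_j$ has codimension one in $L_K(E_\ast)$, so that $[\mathbf{K}_{L},\mathbf{K}_{L}]\subseteq[L,L]\cap\mathbf{K}_{L}\subseteq\mathbf{K}_{\mathcal{I}}$ and the solvability index exceeds that of $\mathbf{K}_{\mathcal{I}}$ by at most one; and (ii) explicit elements witnessing the exact index in each characteristic, which is where all the case-by-case delicacy lives. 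Point (ii) is also where the paper's own argument is at its most fragile (it invokes $u\in\mathbf{K}_{L_K(E_4)}$ in characteristic $\neq2$, which fails for the reason just given), so the ``careful bookkeeping'' you defer is not a formality and may not return the indices you predict.

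Your plan for the non-nilpotency claims --- transporting the witnesses $[A,{}_{m}B]$ from the proof of Proposition \ref{proposition _involution}(b) into a single $\mathbb{M}_2(K[x,x^{-1}])$-summand --- is fine and agrees with the paper, since $\mathbf{K}_{J_j}\subseteq\mathbf{K}_{L_K(E_\ast)}$ and a Lie algebra containing a non-nilpotent subalgebra is not nilpotent.
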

	\begin{proof}
		(1) Consider the graph $E_4$. For each $1\leq i\leq n$, let $I_i$ be the ideal of $L_K(E_4)$ generated by $u_i$. Then, it is clear that, for each $i$,
		$$
		I_i={\rm span}_K\{u_i,e_i,e_i^*, e_ie_i^*\}.
		$$
		Moreover, according to Lemma \ref{lemma_basic}, the set $\{u_i,e_i,e_i^*, e_ie_i^*\}$ is linearly independent over $K$. Thus, it is can be checked that $I_i\cong\mathbb{M}_2(K)$ via the mapping $u_i\mapsto E_{11}$, $e_ie_i^*\mapsto E_{22}$, $e_i \mapsto E_{12}$ and $e_i^*\mapsto E_{21}$. Moreover, we have $\sum_{i=1}^n I_i=\bigoplus_{i=1}^n I_i$. Put $M=\bigoplus_{i=1}^n I_i$. Then, according to Lemma \ref{lemma_sum} and Corollary \ref{lem.matrix.field}, we conclude that $\mathbf{K}_M=\bigoplus_{i=1}^n \mathbf{K}_{I_i}$, and that $\mathbf{K}_M$ is Lie solvable. Moreover, in view of Corollary \ref{lem.matrix.field}, if ${\rm char}(K)\ne 2$, then $[\mathbf{K}_M,\mathbf{K}_M]=0$; and, if ${\rm char}(K) = 2$ then $\mathbf{K}_M$ is Lie solvable of index $2$ but not Lie nilpotent. Now, we consider two possible cases:
		
		\smallskip
		
		\textbf{Case 1.} \textit{$n$ is finite}. In this case, as  $u\in M$, we get that $L_K(E_4) = M$, from which it follows that $\mathbf{K}_{L_K(E_4)}=\mathbf{K}_M$. Thus, all conclusions follow immediately in this case.
		
		\smallskip 
		
		\textbf{Case 2.} \textit{$n$ is infinite}. In this case, $u$ is an infinite emitter and $u\not\in M$. This implies that $L_K(E_4)=Ku \oplus M$ as $K$-vector spaces. It follows that 
		$$[\mathbf{K}_{L_K(E_4)},\mathbf{K}_{L_K(E_4)}]\subseteq[L_K(E_4),L_K(E_4)]=[M,M]\subseteq M.$$ 
		As $M$ is Lie solvable, we conclude that $[\mathbf{K}_{L_K(E_4)},\mathbf{K}_{L_K(E_4)}]$, and hence $\mathbf{K}_{L_K(E_4)}$, is Lie solvable. To determine the solvable index of $\mathbf{K}_{L_K(E_4)}$, we consider the following subcases:
		
		\smallskip 
		
		\textit{Subcase 2.1. ${\rm char}(K)\ne2$}. Because $[\mathbf{K}_{L_K(E_4)},\mathbf{K}_{L_K(E_4)}]\subseteq M$, we conclude that $[\mathbf{K}_{L_K(E_4)},\mathbf{K}_{L_K(E_4)}]\subseteq \mathbf{K}_M$ which is Lie solvable of index 1. This means that $\mathbf{K}_{L_K(E_4)}$ is Lie solvable of index at most $2$. Moreover, as $u,e_1-e_1^*\in \mathbf{K}_{L_K(E_4)}$ satisfying $u(e_1-e_1^*)=e_1\ne-e_1^*=(e_1-e_1^*)u$. That means $[\mathbf{K}_{L_K(E_4)}, \mathbf{K}_{L_K(E_4)}]\ne0$; and so, it is Lie solvable of index $2$. On the other hand, for any $m\geq1$, the element $[e_1-e_1^*,\ce{_m}v]\in \mathbf{K}_{L_K(E_4)}^m$ and $[e_1-e_1^*,\ce{_m}u]=(-1)^me_1-e_1^*\ne0$.  It follows that $\mathbf{K}_{L_K(E_4)}^m\ne0$ for all $m\geq1$. Thus, $\mathbf{K}_{L_K(E_4)}$ is not Lie nilpotent in this case. 
		
		\smallskip 
		
		\textit{Subcase 2.2. ${\rm char}(K)=2$}. As $[\mathbf{K}_{L_K(E_4)},\mathbf{K}_{L_K(E_4)}]\subseteq \mathbf{K}_M$ which is Lie solvable of index $2$, it follows that $\mathbf{K}_{L_K(E_4)}$ is Lie solvable of index at most $3$. Because $v\in \mathbf{K}_{L_K(E_4)}\backslash  \mathbf{K}_M$, we conclude that $\mathbf{K}_{L_K(E_4)}\ne [\mathbf{K}_{L_K(E_4)},\mathbf{K}_{L_K(E_4)}]$. It follows that $\mathbf{K}_{L_K(E_4)}$ is Lie solvable of index $3$. On the other hand,  as $\mathbf{K}_{I_{i}}$ is not Lie nilpotent and $\mathbf{K}_{I_{i}}\subseteq \mathbf{K}_{L_K(E_4)}$ for all $i$, we obtain that $\mathbf{K}_{L_K(E_4)}$ is not Lie nilpotent. The proof of assertion (1) is complete. 
		
		(2) Consider the graph $E_5$. For each $1\leq i\leq m$, let $J_i$ be the ideal of $L_K(E_5)$ generated by the $v_j$'s. Then, we have $J_j\cong\mathbb{M}_2(K[x,x^{-1}])$ for all $j$, and $\sum_{i=1}^n J_j=\bigoplus_{j=1}^n J_j$. Therefore, one can repeat the arguments of the proof of assertion(1), together with using Corollary \ref{lem.matrix.LaurentPolyRing} instead of Corollary \ref{lem.matrix.field}, to get (2).
		
		(3) Consider the graph $E_6$. Let $I_i$ and $J_j$ be respectively the ideals of  $L_K(E_6)$ generated by $v_i$ and $u_j$ for $1\leq i\leq n$ and $1\leq j\leq m$. Then, for each $i$ and each $j$, we have $I_i\cong \mathbb{M}_2(K)$ and $J_j\cong \mathbb{M}_2(K[x,x^{-1}])$. It can be checked that 
		$$
		\sum_{i=1}^n I_i+\sum_{j=1}^m J_j=\bigoplus_{i=1}^n I_i\oplus \bigoplus_{j=1}^m J_j.
		$$
		Put $M=\bigoplus_{i=1}^n I_i$ and $N=\bigoplus_{j=1}^m J_j$. Then, as before, $\mathbf{K}_M$ is Lie solvable of index at most $3$; and, $\mathbf{K}_N$ is Lie solvable (of index at most $4$) if and only if ${\rm char}(K)=2$. Moreover, it is clear that $L_K(E_6)=Kw+(M\oplus N)$. As $N$ is not Lie nilpotent, we get that  $\mathbf{K}_{L_K(E_6)}$ is not Lie nilpotent. Moreover, we have that $\mathbf{K}_{L_K(E_6)}$ is Lie solvable if and only if both $\mathbf{K}_M$ and $\mathbf{K}_N$ are Lie solvable, if and only if ${\rm char}(K)=2$. Now, assume that ${\rm char}(K)=2$. To determine the Lie solvability index of $\mathbf{K}_{L_K(E_6)}$, we consider the following cases. If both $|I|$ and $|J|$ are finite, then $w\in M\oplus N$; and so, $L_K(E_6)=M\oplus N$. Again, by Lemma \ref{lemma_sum}, we obtain that $\mathbf{K}_{L_K(E_6)}=\mathbf{K}_M\oplus \mathbf{K}_N$. According to Corollary \ref{lem.matrix.field}, $\mathbf{K}_M$ is Lie solvable of index $2$, while by Corollary \ref{lem.matrix.LaurentPolyRing}, $\mathbf{K}_N$ is Lie solvable of index $3$. It follows that $\mathbf{K}_{L_K(E_6)}$ is Lie solvable of index $3$. If either $|I|$ or $|J|$ is infinite, then $w\notin M\oplus N$. Thus, a similar argument as used in \textit{Case 2} above shows that $\mathbf{K}_{L_K(E_6)}$ is Lie solvable of index $4$. 
	\end{proof}
	
	We now present the main result of this section: a sufficient and necessary condition for $\mathbf{K}_{L_K(E)}$ to be Lie solvable.
	
	\begin{theorem}\label{theorem_solvable}
		Let $K$ be a field and $E$ a graph. Then, the followings hold:
		\begin{enumerate}[font=\normalfont]
			\item If ${\rm char}(K)=2$, then $\mathbf{K}_{L_K(E)}$ is Lie solvable $($of index $\leq4$$)$ if and only if $E$ is a disjoint union of graphs $E_1,\dots, E_6$. Specially, the index is $4$ precisely when $E$ contains either $E_5$ or $E_6$ with infinite $|I|$ or $|J|$.
			\item If ${\rm char}(K)\ne2$, then $\mathbf{K}_{L_K(E)}$ is Lie solvable $($of index $\leq2$$)$ if and only if $E$ is a disjoint union of graphs $E_1$, $E_2$ and $E_4$.  Specially, the index is $2$ precisely when $E$ contains $E_4$ with infinite $|I|$.
		\end{enumerate}
	\end{theorem}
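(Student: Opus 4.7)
The plan is to assemble the theorem from the structural/necessity result in Lemma~\ref{lemma_solvability_matrix} and Remark~\ref{corollary_E_1-E_6}, the case-by-case solvability results in Lemma~\ref{lemma_Liesolvable of E1,2,3} and Lemma~\ref{lemma_Liesolvable of E4,5,6}, and the direct-sum transport in Lemma~\ref{lemma_sum}. The overall architecture is: disjoint-union decomposition of $E$ $\Leftrightarrow$ involution-invariant direct-sum decomposition of $L_K(E)$, at which point the Lie-solvability question reduces to a component-by-component analysis whose answer is already in hand.

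For the necessity direction I would start by assuming $\mathbf{K}_{L_K(E)}$ is Lie solvable. By Lemma~\ref{lemma_solvability_matrix} the graph $E$ contains no cycle with an exit and none of the subgraphs $F_1, F_2, F_3$. Hence, by Remark~\ref{corollary_E_1-E_6}, each connected component of $E$ is isomorphic to one of $E_1, \dots, E_6$ (possibly with infinite indexing sets $I$ or $J$). If $\mathrm{char}(K) \neq 2$, I then exclude the components $E_3$, $E_5$, $E_6$: Lemma~\ref{lemma_Liesolvable of E1,2,3}(ii) and Lemma~\ref{lemma_Liesolvable of E4,5,6}(2)--(3) show that the corresponding $\mathbf{K}_{L_K(E_i)}$ fails to be Lie solvable, and since each $L_K(E_i)$ is an involution-invariant ideal of $L_K(E)$ (the inclusion preserving the standard involution), Lemma~\ref{lemma_sum} propagates this failure to $\mathbf{K}_{L_K(E)}$, contradicting the hypothesis. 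Thus in characteristic $\neq 2$ only the components $E_1, E_2, E_4$ are allowed.

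For the sufficiency direction, suppose $E$ is a disjoint union of graphs from the allowed list (all of $E_1,\dots,E_6$ in characteristic $2$; only $E_1,E_2,E_4$ otherwise). Writing $E=\bigsqcup_{\alpha} E^{(\alpha)}$ with each $E^{(\alpha)}$ a connected component, the Leavitt path algebra decomposes as an internal direct sum $L_K(E)=\bigoplus_{\alpha} L_K(E^{(\alpha)})$ of two-sided ideals, each stable under the standard involution $^\star$ (since $^\star$ fixes vertices and swaps edges within a single component). Lemma~\ref{lemma_sum} therefore gives $\mathbf{K}_{L_K(E)}=\bigoplus_{\alpha} \mathbf{K}_{L_K(E^{(\alpha)})}$, and the Lie-solvability (and the Lie-solvable index) of the whole equals the supremum of the Lie-solvable indices of the summands. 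Lemmas~\ref{lemma_Liesolvable of E1,2,3} and~\ref{lemma_Liesolvable of E4,5,6} bound those indices uniformly by $4$ when $\mathrm{char}(K)=2$ and by $2$ when $\mathrm{char}(K)\neq 2$, which establishes Lie solvability together with the stated bounds on the global index.

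Finally, to pin down when these bounds are attained, I would read off the maxima from Lemmas~\ref{lemma_Liesolvable of E1,2,3} and~\ref{lemma_Liesolvable of E4,5,6}. In characteristic $2$, the index reaches $4$ exactly when some component is $E_5$ with infinite $|J|$ or $E_6$ with $|I|$ or $|J|$ infinite; otherwise all component indices are at most $3$. In characteristic $\neq 2$, all summands have index $1$ except a copy of $E_4$ with infinite $|I|$, which contributes index $2$; so the global index is $2$ precisely when such an $E_4$ appears, and $1$ (i.e.\ $\mathbf{K}_{L_K(E)}$ is abelian) otherwise. The main obstacle I expect is not a calculation but a bookkeeping subtlety: when a component $E_4$, $E_5$, or $E_6$ has an infinite-emitter central vertex, its Leavitt path algebra is \emph{not} itself a direct sum of the ideals at the leaves but equals $Kv \oplus (\text{sum of leaf ideals})$ as a $K$-vector space; one must check (as in \textit{Case~2} of the proof of Lemma~\ref{lemma_Liesolvable of E4,5,6}) that the resulting extra central generators raise the Lie-solvable index by at most one and do not destroy the direct-sum analysis across distinct connected components, and then combine this with Lemma~\ref{lemma_sum} applied to the global component decomposition.
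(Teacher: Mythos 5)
Your proposal is correct and follows essentially the same route as the paper: necessity via Lemma~\ref{lemma_solvability_matrix} and Remark~\ref{corollary_E_1-E_6} (plus, in characteristic $\neq 2$, excluding $E_3$, $E_5$, $E_6$ by Lemmas~\ref{lemma_Liesolvable of E1,2,3} and~\ref{lemma_Liesolvable of E4,5,6}), and sufficiency by decomposing $L_K(E)$ into the involution-invariant ideals coming from the connected components and applying Lemma~\ref{lemma_sum} together with the component-wise index bounds. Your write-up is in fact more explicit than the paper's (which compresses part (2) to ``similar'') but introduces no new ideas or different machinery.
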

	\begin{proof}
		
		To prove (1), we assume first that ${\rm char}(K)=2$. For a proof of the ``only if'' part, we assume that $L_K(E)$ is Lie solvable. Then by Remark \ref{corollary_E_1-E_6}, the graph $E$ is a disjoint union of graphs $E_1,\dots, E_6$. Conversely, we assume that $E$ is a disjoint union of graphs $E_1,\dots, E_6$. Then, $L_K(E)$ is a direct summand of ideals each of which is isomorphic to $L_K(E_1)$, $\dots$, $L_K(E_6)$. In view of Lemmas \ref{lemma_Liesolvable of E1,2,3} and \ref{lemma_Liesolvable of E4,5,6}, we get that $L_K(E_1)$, $\dots$, $L_K(E_6)$ are Lie solvable, and so the assertion (1) follows. Finally, the proof of (2) is similar to that of (1).
	\end{proof}
	If $E$ is row-finite, then by Remark \ref{remark_finite cases},  we can obtain more detailed descriptions as follows:
	\begin{corollary}\label{corollary_LieSol_rowfinite}
		Let $K$ be a field and $E$ a row-finite graph.
		\begin{enumerate}[font=\normalfont]
			\item If ${\rm char}(K)=2$, then $\mathbf{K}_{L_K(E)}$ is Lie solvable $($of index $\leq3$$)$ if and only if $E$ is a disjoint union graphs $E_1,\dots, E_6$ with finite $|I|$ and $|J|$. 
			In this case, 
			$$
			L_K(E)\cong \bigoplus_{I_1}K\oplus \bigoplus_{I_2}K[x,x^{-1}] \oplus \bigoplus_{I_3}\mathbb{M}_2(K)\oplus \bigoplus_{I_4}\mathbb{M}_2(K[x,x^{-1}]),
			$$
			for suitable $($possibly infinite$)$ indexing sets $I_1$, $I_2$, $I_3$, $I_4$. Moreover, the index is $3$ precisely when $E$ contains $E_3$, $E_5$ or $E_6$.
			 
			\item If ${\rm char}(K)\ne2$, then $\mathbf{K}_{L_K(E)}$ is Lie solvable $($of index $\leq2$$)$ if and only if $E$ is a disjoint union of graphs $E_1$, $E_2$ and $E_4$ with finite $|I|$. In this case, we have 
			$$
			L_K(E)\cong \bigoplus_{J_1}K\oplus \bigoplus_{J_2}K[x,x^{-1}] \oplus \bigoplus_{J_3}\mathbb{M}_2(K),
			$$
			for suitable $($possibly infinite$)$ indexing sets $J_1$, $J_2$, $J_3$. Moreover, the index is $2$ precisely when $E$ contains $E_3$.
		\end{enumerate}
	\end{corollary}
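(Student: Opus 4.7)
The plan is to derive Corollary \ref{corollary_LieSol_rowfinite} as a direct consequence of Theorem \ref{theorem_solvable}, Remark \ref{remark_finite cases}, and one structural observation about row-finiteness. The key additional input, beyond what Theorem \ref{theorem_solvable} already provides, is the following simple remark: in each of the graphs $E_4$, $E_5$, $E_6$, the distinguished vertex ($u$, $v$, or $w$ respectively) emits exactly one edge for every element of the indexing set $I$ or $J$. Consequently, whenever $E$ is row-finite and contains a copy of $E_4$, $E_5$, or $E_6$ as a connected component, the corresponding sets $I$ and $J$ must automatically be finite, since otherwise the distinguished vertex would be an infinite emitter.

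For part (1), assuming $\mathrm{char}(K) = 2$, I would combine this observation with Theorem \ref{theorem_solvable}(1): $\mathbf{K}_{L_K(E)}$ is Lie solvable iff $E$ is a disjoint union of copies of $E_1,\dots,E_6$, and row-finiteness then forces each $|I|$ and $|J|$ appearing in any $E_4,E_5,E_6$ component to be finite. Remark \ref{remark_finite cases} identifies each $L_K(E_i)$ with one of $K$, $K[x,x^{-1}]$, $\mathbb{M}_2(K)$, or $\mathbb{M}_2(K[x,x^{-1}])$, and collecting isomorphic summands gives the displayed decomposition for suitable indexing sets $I_1,I_2,I_3,I_4$. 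The index refinement then follows by invoking Lemma \ref{lemma_sum}, which gives $\mathbf{K}_{L_K(E)} = \bigoplus_c \mathbf{K}_{L_K(c)}$ over connected components $c$, so the solvability index of the whole is the supremum of the indices of the components; these are computed in Lemmas \ref{lemma_Liesolvable of E1,2,3} and \ref{lemma_Liesolvable of E4,5,6}. The components $E_1,E_2$ contribute index $\leq 1$ and $E_4$ (with finite $|I|$, char $2$) contributes index $2$, whereas $E_3$, $E_5$, $E_6$ (with finite $|I|,|J|$, char $2$) each contribute index $3$, yielding the precise dichotomy.

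Part (2) proceeds in exactly the same way using Theorem \ref{theorem_solvable}(2), which in the char $K \neq 2$ case restricts the allowed components to $E_1, E_2, E_4$. Row-finiteness again forces $|I|$ to be finite in every $E_4$ component, Remark \ref{remark_finite cases} supplies the decomposition, and the Lie solvability indices from Lemma \ref{lemma_Liesolvable of E4,5,6}(1) together with Lemma \ref{lemma_sum} give the stated index bound.

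Since essentially every ingredient is already laid out in the earlier sections, I do not expect a serious obstacle. The only step requiring a bit of care is making sure that the direct-sum decomposition from Lemma \ref{lemma_sum} transfers Lie-solvability indices in the expected way (namely, that the index of $\bigoplus_j \mathbf{K}_{I_j}$ equals $\sup_j \mathrm{index}(\mathbf{K}_{I_j})$ and this supremum is attained when finitely many distinct indices occur), so that both the upper bound on the overall index and the "precisely when" statement for equality can be read off component-by-component.
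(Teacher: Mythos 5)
Your proposal is correct and is essentially the paper's own argument: the paper offers no separate proof beyond the one-line appeal to Remark \ref{remark_finite cases}, and your observation that row-finiteness forces $|I|$ and $|J|$ to be finite (since the distinguished vertex of $E_4$, $E_5$, $E_6$ emits one edge per index element) is exactly the intended bridge from Theorem \ref{theorem_solvable} and Lemmas \ref{lemma_sum}, \ref{lemma_Liesolvable of E1,2,3}, \ref{lemma_Liesolvable of E4,5,6} to the stated decomposition and index bounds. The only point to flag is that the final clause of part (2) (``the index is $2$ precisely when $E$ contains $E_3$'') cannot be obtained by your argument or any other, since $E_3$ is excluded when ${\rm char}(K)\ne 2$ and every permitted component ($E_1$, $E_2$, $E_4$ with finite $|I|$) has index $1$ by Lemma \ref{lemma_Liesolvable of E4,5,6}(1); this is an inconsistency in the corollary's statement rather than a gap in your proof, and your component-wise computation in fact shows the index there is at most $1$.
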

	The following theorem provides a necessary and sufficient condition for $\mathbf{K}_{L_K(E)}$ to be Lie nilpotent. The proof of this theorem is similar to that of Theorem \ref{theorem_solvable}, so it should be omitted.
	\begin{theorem}\label{theorem_nilpotent}
		Let $K$ be a field and $E$ a graph. 
		\begin{enumerate}[font=\normalfont]
			\item If ${\rm char}(K)=2$, then $\mathbf{K}_{L_K(E)}$ is Lie nilpotent if and only if $E$ is a disjoint union graphs $E_1$ and $E_2$.
			\item If ${\rm char}(K)\ne2$, then $\mathbf{K}_{L_K(E)}$ is Lie nilpotent if and only if $E$ is a disjoint union graphs $E_1$, $E_2$, and $E_4$ with finite $n$.  
		\end{enumerate}
		In particular, $\mathbf{K}_{L_K(E)}$ is Lie nilpotent if and only if $[\mathbf{K}_{L_K(E)}, \mathbf{K}_{L_K(E)}]=0$.
	\end{theorem}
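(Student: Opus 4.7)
The plan is to reduce the nilpotency question to the solvability classification already established in Theorem \ref{theorem_solvable}, then sharpen it using the explicit nilpotency information recorded in Lemmas \ref{lemma_Liesolvable of E1,2,3} and \ref{lemma_Liesolvable of E4,5,6} for each of the building blocks $E_1,\ldots,E_6$.

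For the ``only if'' direction I would first note that Lie nilpotency implies Lie solvability, so Theorem \ref{theorem_solvable} forces $E$ to be a disjoint union of $E_1,\ldots,E_6$ when $\operatorname{char}(K)=2$, or of $E_1,E_2,E_4$ otherwise. I would then discard each building block whose $\mathbf{K}_{L_K(\cdot)}$ is known not to be Lie nilpotent. Lemma \ref{lemma_Liesolvable of E1,2,3} kills $E_3$, and Lemma \ref{lemma_Liesolvable of E4,5,6} kills $E_5$, $E_6$, and (in characteristic $2$) $E_4$, so only $E_1$ and $E_2$ survive when $\operatorname{char}(K)=2$. In characteristic different from $2$, Lemma \ref{lemma_Liesolvable of E4,5,6} leaves $E_4$ with finite $|I|$ as the only additional block beyond $E_1,E_2$. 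The key leverage here is that the non-nilpotency statements in those lemmas come with explicit witnesses $A,B$ for which the iterated bracket $[A,{}_m B]$ never vanishes; since these witnesses live inside the ideal generated by the corresponding building block, they persist in $\mathbf{K}_{L_K(E)}$ as soon as that block occurs in $E$.

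For the ``if'' direction I would decompose $L_K(E)$ as a direct sum of ideals isomorphic to the $L_K(E_i)$'s indexed by the connected components, and apply Lemma \ref{lemma_sum} to obtain $\mathbf{K}_{L_K(E)}=\bigoplus_i \mathbf{K}_{L_K(E_i)}$. Lie nilpotency of the sum with a uniform index then reduces to Lie nilpotency of each summand. In every surviving case each summand is in fact Lie abelian: $L_K(E_1)\cong K$ and $L_K(E_2)\cong K[x,x^{-1}]$ are commutative, and for $E_4$ with finite $|I|$ in characteristic different from $2$, Corollary \ref{lem.matrix.field} gives $[\mathbf{K}_{\mathbb{M}_2(K)},\mathbf{K}_{\mathbb{M}_2(K)}]=0$. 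This simultaneously proves the converse and delivers the ``in particular'' statement, since $[\mathbf{K}_{L_K(E)},\mathbf{K}_{L_K(E)}]=0$ trivially forces Lie nilpotency.

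The main obstacle I anticipate is making sure the common-index hypothesis in Lemma \ref{lemma_sum} is respected when infinitely many copies of $E_1,E_2$ (or $E_4$) appear: one must verify that index $1$, i.e.\ the vanishing of the bracket, propagates across an infinite direct sum. Fortunately the surviving cases are all Lie abelian, so the common index is uniformly $1$ and Lemma \ref{lemma_sum} applies without difficulty.
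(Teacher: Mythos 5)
Your proposal is correct and follows essentially the route the paper intends: the paper omits this proof as ``similar to that of Theorem \ref{theorem_solvable}'', and your argument is exactly that reduction --- nilpotency forces solvability, hence the block decomposition of Remark \ref{corollary_E_1-E_6}, after which the per-block nilpotency data in Lemmas \ref{lemma_Liesolvable of E1,2,3} and \ref{lemma_Liesolvable of E4,5,6} eliminates $E_3$, $E_5$, $E_6$, and the bad $E_4$ cases, while Lemma \ref{lemma_sum} and the vanishing of the bracket on each surviving block give the converse and the ``in particular'' clause. Your observation that every surviving summand is Lie abelian (so the common-index hypothesis of Lemma \ref{lemma_sum} is trivially met) is the right way to handle infinitely many components.
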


	\begin{corollary}
		Let $K$ be a field and $E$ a graph. If $E$ contains an infinite emitter, then $\mathbf{K}_{L_K(E)}$ is not Lie nilpotent.
	\end{corollary}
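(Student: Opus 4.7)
The plan is to derive this corollary directly from Theorem \ref{theorem_nilpotent} by contrapositive. Suppose that $\mathbf{K}_{L_K(E)}$ is Lie nilpotent. Then Theorem \ref{theorem_nilpotent} forces $E$ to be a disjoint union of graphs drawn from a very restricted list, which depends on the characteristic of $K$: namely, $\{E_1,E_2\}$ if $\mathrm{char}(K)=2$, and $\{E_1,E_2,E_4\text{ with finite }|I|\}$ if $\mathrm{char}(K)\ne2$.

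Next, I would verify that no vertex in any graph on this list is an infinite emitter. In $E_1$, the unique vertex emits no edges; in $E_2$, the unique vertex emits exactly one edge (a loop); in $E_4$ with $|I|$ finite, the central vertex $u$ emits $|I|<\infty$ edges while every other vertex is a sink. Since being an infinite emitter is a purely local property of a vertex within its connected component, a disjoint union of graphs from this list likewise contains no infinite emitter.

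Therefore, if $E$ \textit{does} contain an infinite emitter, $E$ cannot be written as such a disjoint union, so by Theorem \ref{theorem_nilpotent} we conclude that $\mathbf{K}_{L_K(E)}$ is not Lie nilpotent. There is no substantive obstacle here: the corollary is a bookkeeping consequence of the classification already established.
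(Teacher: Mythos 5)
Your argument is correct and is precisely the intended derivation: the paper states this corollary without proof, treating it as an immediate consequence of Theorem \ref{theorem_nilpotent}, and your contrapositive reading together with the observation that every vertex of $E_1$, $E_2$, and $E_4$ with finite $|I|$ emits only finitely many edges is exactly the bookkeeping required. Nothing further is needed.
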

	
	The following corollary addresses a situation where being Lie solvable and being Lie nilpotent are equivalent.
	
	\begin{corollary}\label{corollary_charkhac2}
		Let $K$ be a field and $E$ a row-finite graph.  If ${\rm char}(K)\ne2$, then $\mathbf{K}_{L_K(E)}$ is Lie solvable if and only if it is Lie nilpotent, if and only if $E$ is a disjoint union of graphs $E_1$, $E_2$ and $E_4$.
	\end{corollary}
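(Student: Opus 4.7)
The plan is to observe that this corollary is essentially a direct combination of Corollary \ref{corollary_LieSol_rowfinite}(2) and Theorem \ref{theorem_nilpotent}(2), with the row-finiteness assumption reconciling the finiteness conditions appearing in the two characterizations. I will close the chain of equivalences by proving three implications.

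First, note that Lie nilpotency always implies Lie solvability (for any Lie algebra, the derived series is majorised by the lower central series in the sense that $\mathcal{L}^{(n)} \subseteq \mathcal{L}^{n}$), so that direction is immediate and requires no argument specific to the setting.

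Next, for the implication that Lie solvability of $\mathbf{K}_{L_K(E)}$ forces $E$ to be a disjoint union of $E_1$, $E_2$ and $E_4$, I will cite Corollary \ref{corollary_LieSol_rowfinite}(2) directly, since we are assuming $\mathrm{char}(K) \neq 2$ and $E$ row-finite. That corollary yields the desired structural description of $E$ (and even provides that the $|I|$ appearing in $E_4$ is finite, but this is automatic here since row-finiteness of $E$ forces the vertex $u$ of $E_4$ to emit only finitely many edges).

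For the remaining implication, assume $E$ is a disjoint union of copies of $E_1$, $E_2$ and $E_4$. Row-finiteness again forces that, in every $E_4$ component, the associated indexing set $|I|$ is finite. I can then apply Theorem \ref{theorem_nilpotent}(2), whose hypothesis is exactly "$E$ is a disjoint union of $E_1$, $E_2$, and $E_4$ with finite $n$", to conclude that $\mathbf{K}_{L_K(E)}$ is Lie nilpotent. Combining the three steps yields the triple equivalence as stated. There is no real obstacle here: the entire content of the corollary is a bookkeeping consequence of the earlier classification theorems together with the elementary observation that "row-finite" automatically enforces the finiteness restriction on $|I|$ that appears in both the Lie solvable and Lie nilpotent criteria in characteristic $\neq 2$.
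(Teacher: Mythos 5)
Your proposal is correct and follows essentially the same route as the paper: cite Corollary \ref{corollary_LieSol_rowfinite}(2) to get the structural description of $E$ from Lie solvability, then apply Theorem \ref{theorem_nilpotent}(2) to conclude Lie nilpotency, with row-finiteness supplying the finiteness of $|I|$ and the standard inclusion $\mathcal{L}^{(n)}\subseteq\mathcal{L}^{n}$ closing the cycle. The paper's proof is just a terser version of the same argument.
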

	\begin{proof}
		Assume that $L_K(E)$ is Lie solvable. In view of Corollary \ref{corollary_LieSol_rowfinite}, $E$ is a disjoint union of a finite number of graphs $E_1$, $E_2$ or $E_4$ with finite $|I|$. Therefore, $L_K(E)$ is Lie nilpotent by Theorem \ref{theorem_nilpotent}. 
	\end{proof}
	
	In the remainder of this section, we investigate the solvability of the Jordan algebra $\mathbf{S}_{L_K(E)}$. 
	\begin{theorem}\label{theorem_Jordan_sol}
		Let $K$ be a field and $E$ a graph. Then, the following assertions hold:
		\begin{enumerate}[font=\normalfont]
			\item If ${\rm char}(K)=2$, then $\mathbf{S}_{L_K(E)}$ is Jordan solvable $($resp., nilpotent$)$ if and only if $E$ is a disjoint union of graphs $E_1,\dots, E_6$ $($resp., $E_1$ and $E_2$$)$. 
			\item If ${\rm char}(K)\ne2$, then $\mathbf{S}_{L_K(E)}$ is not Jordan nilpotent. 
		\end{enumerate}
	\end{theorem}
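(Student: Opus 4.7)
The plan is to observe that the theorem splits into two almost orthogonal parts, each of which reduces to something we have already done or a one-line computation.

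For part (1), the key observation is that when $\mathrm{char}(K)=2$, the two notions of symmetric and skew-symmetric coincide, namely $\mathbf{S}_{L_K(E)}=\mathbf{K}_{L_K(E)}$, since $x^\star=x$ is equivalent to $x^\star=-x$. Moreover, the Jordan product $a\circ b = ab+ba$ coincides with the Lie bracket $[a,b]=ab-ba$, because $-1=1$. Consequently the derived series $\mathcal{J}_{(n)}$ and the lower central series $\mathcal{J}_n$ defined via $\circ$ coincide term-by-term with the corresponding series $\mathcal{L}^{(n)}$ and $\mathcal{L}^{n}$ defined via $[\,,\,]$ on $\mathbf{K}_{L_K(E)}$. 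Therefore, $\mathbf{S}_{L_K(E)}$ is Jordan solvable (resp.\ Jordan nilpotent) if and only if $\mathbf{K}_{L_K(E)}$ is Lie solvable (resp.\ Lie nilpotent), so the statement is an immediate translation of Theorem~\ref{theorem_solvable}(1) and Theorem~\ref{theorem_nilpotent}(1).

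For part (2), I would argue by exhibiting a single element whose iterated Jordan powers never vanish. Pick any vertex $v\in E^0$; then $v^\star=v$, so $v\in\mathbf{S}_{L_K(E)}$, and $v$ is an idempotent. Compute $v\circ v = 2v^2=2v$, which lies in $\mathbf{S}_{L_K(E)}\circ \mathbf{S}_{L_K(E)} = \mathbf{S}_{L_K(E)}^{}_{1}$. A trivial induction shows $2^n v\in \mathbf{S}_{L_K(E)}^{}_{n}$ for all $n\geq 0$, since $(2^{n-1}v)\circ v = 2^n v$. Because $\mathrm{char}(K)\neq 2$, we have $2^n v\neq 0$, and hence $\mathbf{S}_{L_K(E)}^{}_{n}\neq 0$ for every $n$. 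Therefore $\mathbf{S}_{L_K(E)}$ is not Jordan nilpotent (assuming $E^0\neq\emptyset$; the empty case is vacuous).

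There is no real obstacle here: the whole content of (1) is already encoded in the Lie-algebra theorems proved above, and part (2) reduces to noting that an idempotent symmetric element survives every step of the lower Jordan series in characteristic different from $2$. The only small subtlety worth flagging in the write-up is verifying that the two series definitions (Jordan vs.\ Lie) literally agree in characteristic $2$, so that the reduction in part (1) is a genuine identification and not merely an analogy.
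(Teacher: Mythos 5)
Your proposal is correct and follows essentially the same route as the paper: part (1) is the identification $\mathbf{S}_{L_K(E)}=\mathbf{K}_{L_K(E)}$ with $a\circ b=[a,b]$ in characteristic $2$, reducing to Theorems \ref{theorem_solvable} and \ref{theorem_nilpotent}, and part (2) is the same computation $X_m=2^m v\neq 0$ with a vertex $v$. No changes needed.
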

	\begin{proof}
		(1) Assume that $\mathrm{char}(K)=2$. Then,
		\[
		a\circ b=ab+ba=ab-ba=[a,b] \text{ for any }a, b\in L_K(E).
		\]
		It follows that $\mathbf{S}_{L_K(E)}=\mathbf{K}_{L_K(E)}$. Thus, $\mathbf{S}_{L_K(E)}$ is Jordan solvable (resp., nilpotent) if and only if $\mathbf{K}_{L_K(E)}$ is Lie solvable (resp., nilpotent). By Theorems \ref{theorem_solvable} and \ref{theorem_nilpotent}, all conclusions of (1) are asserted.
		
		(2) Assume that $\mathrm{char}(K)\neq 2$. Let $v$ be an arbitrary vertex of $E$. Then, it is clear that $v\in \mathbf{S}_{L_K(E)} $. Put $X_0=v$ and $X_{m+1}=X_m\circ v$, for all $m\geq 0$. Then, it can be checked that $X_m=2^mv$ for all $m\geq 0$. Since $\mathrm{char}(K)\neq 2$, it follows that $X_{m}\neq 0$  for all $m\geq 0$. Hence, $\mathbf{S}_{L_K(E)}$ is not Jordan nilpotent.
	\end{proof}
	To conclude this paper, we relate the solvability of the four structures $L_K(E)^\circ$, $\mathbf{S}_{L_K(E)}$, $L_K(E)^-$ and $\mathbf{K}_{L_K(E)}$.
	\begin{remark}
		Theorem \ref{theorem_Jordan_sol} is also true if we replace $\mathbf{S}_{L_K(E)}$ by the whole $L_K(E)^\circ$. This means $L_K(E)^\circ$ is Jordan solvable or Jordan nilpotent if and only if $\mathbf{S}_{L_K(E)}$ is. At the other extreme, in view of Theorem \ref{theorem_solvable}(2), if ${\rm char}(K)\ne2$ then $\mathbf{K}_{L_K(E_4)}$ is Lie solvable but $L_K(E_4)^-$ is not Lie solvable. These facts show that $\mathbf{K}_{L_K(E)}$ is Lie solvable for a wider class of graph $E$ than $L_K(E)^-$. Also, in some senses, the Lie algebra $\mathbf{S}_{L_K(E)}$ is bigger than the Jordan algebra $\mathbf{K}_{L_K(E)}$ in $L_K(E)$.
	\end{remark}
	
	{\noindent\textbf{Funding.} } This work is funded by Vietnam National Foundation for Science and Technology Development (NAFOSTED) under Grant No. 101.04-2025.41.

\end{document}